\numberwithin{equation}{section}
\numberwithin{figure}{section}
\theoremstyle{plain}
\newtheorem{thm}{\protect\theoremname}
  \theoremstyle{plain}
  \newtheorem{lem}[thm]{\protect\lemmaname}
  \theoremstyle{plain}
  \newtheorem{cor}[thm]{\protect\corollaryname}
  \theoremstyle{plain}
  \newtheorem{prop}[thm]{\protect\propositionname}
  \providecommand{\corollaryname}{Corollary}
  \providecommand{\lemmaname}{Lemma}
  \providecommand{\propositionname}{Proposition}
\providecommand{\theoremname}{Theorem}
\begin{document}
\global\long\def\F{\mathcal{F}}
\global\long\def\G{\mathcal{G}}
\global\long\def\H{\mathcal{H}}
\global\long\def\U{\mathcal{U}}
\global\long\def\S{\mathcal{S}}
\global\long\def\filter{\mathbb{F}}
\global\long\def\lm{\mathrm{lim}}
\global\long\def\adh{\mathrm{adh}}
\global\long\def\N{\mathcal{N}}
\global\long\def\A{\mathcal{A}}
\global\long\def\then{\Longrightarrow}

\title[regularity in CAP]{Function spaces and contractive extensions in approach theory: the role of regularity}

\author{Eva Colebunders, Fr\'ed\'eric Mynard and William Trott}
\begin{abstract}
Two classical results characterizing regularity of a convergence space
in terms of continuous extensions of maps on one hand, and in terms
of continuity of limits for the continuous convergence on the other,
are extended to convergence-approach spaces. Characterizations are
obtained for two alternative extensions of regularity to convergence-approach
spaces: regularity and strong regularity. The results improve upon
what is known even in the convergence case. On the way, a new notion
of strictness for convergence-approach spaces is introduced. 
\end{abstract}

\subjclass[2000]{54A20, 54A05, 54D10, 54B30, 54B05, 54C05, 54C35.}

\address{Vakgroep Wiskunde, Vrije Universiteit Brussel, Pleinlaan 2, 1050 Brussel , Belgi\"{e}
 Mathematical Sciences, POBOX 8093, Georgia Southern University, Statesboro,
GA 30460, USA.}

\email{evacoleb@vub.ac.be; fmynard@georgiasouthern.edu; wtrott@georgiasouthern.edu}

\maketitle

\section{Preliminaries}

For a set $X$, we denote the set of all filters on $X$ as $\filter X$,
and the set of all ultrafilters on $X$ as $\mathbb{U}X$. We order
$\mathbb{F}X$ by inclusion of families. We denote the set of ultrafilters
finer than some $\F\in\filter X$ as $\mathbb{U}(\F)$, and the principal
filter of $A\subseteq X$ is denoted $\{A\}^{\uparrow}$. More generally,
if $\A\subseteq2^{X}$ then $\A^{\uparrow}:=\{B\subseteq X:\exists A\in\A,\, A\subseteq B\}$.
If $f:X\rightarrow Y$ and $\F\in\filter X$, then the image filter
is denoted $f[\F]:=\{f(F):F\in\F\}^{\uparrow}$. Note that if $\F\in\mathbb{F}X$
and $\S:X\to\mathbb{F}Y$ then 
\[
\S(\F):=\bigcup_{F\in\F}\bigcap_{x\in F}\S(x)
\]
is a filter on $Y$.

For some relation $\xi$ between $\filter X$ and $X$, for $x\in X$
and $\F\in\filter X$, we say that $\F$ converges to $x$ in $(X,\xi)$,
denoted by $x\in\lm_{X}\F$, or $x\in\lm\F$ if no ambiguity arises
(%
\footnote{More generally, we can extend convergence to filter-bases by $\lim\mathcal{B}:=\lim\mathcal{B}^{\uparrow}$,
so that we do not distinguish between a filter-base and the filter
it generates unless explicitly needed.%
}), if $(\F,x)\in\xi$. We say $(X,\xi)$ is a \emph{preconvergence
space} if it satisfies the following conditions: 
\[
\F\leq\G\Longrightarrow\lm\F\subseteq\lm\G;
\]
\[
\forall\F,\,\G\in\filter X,\,\lm(\F\wedge\G)\supseteq\lm\F\cap\lm\G,
\]
and a \emph{convergence }if it additionally satisfies
\[
\forall x\in X,\, x\in\lm\{x\}^{\uparrow}.
\]
Of course, every topology can be considered as a convergence, setting
$x\in\lim\F$ if and only if $\F\supseteq\N(x)$, where $\N(x)$ denotes
the neighborhood filter of $x$.

Given convergence spaces $X$ and $Y$, a map $f:X\rightarrow Y$
is \emph{continuous} if 
\[
f(\lm_{X}\F)\subseteq\lm_{Y}f[\F].
\]
The category with convergence spaces as objects and continuous maps
as morphisms is denoted \textbf{Conv}. 

Convergence-approach spaces, introduced in \cite{lowen1}, generalize
convergence spaces.
The pair $(X,\lambda)$ is called a \emph{preconvergence-approach
space} if $\lambda:\filter X\rightarrow[0,\infty]^{X}$ (where $[0,\infty]^{X}$ is ordered pointwise) satisfies
(%
\footnote{This condition implies in particular that 
\[
\F\leq\G\implies\lambda(\F)\geq\lambda(\G).
\]
})

\[
\forall\F,\G\in\filter X,\,\lambda(\F\wedge\G)=\lambda(\F)\vee\lambda(\G),
\]
 and a \emph{convergence-approach space}, or shortly a CAP space, if it additionally
satisfies
\begin{equation}
\forall x\in X,\,\lambda(\{x\}^{\uparrow})(x)=0.\label{eq:centered}
\end{equation}
We often denote a CAP space $(X,\lambda)$ simply by $X$ and we denote
by $\lambda_{X}$ the associated limit. 

A function $f:X\rightarrow Y$ between two convergence-approach spaces
is a \emph{contraction} if 
\[
\lambda_{Y}(f[\F])\circ f\leq\lambda_{X}(\F)
\]
 for every $\F\in\filter X$. We denote by \textbf{Cap }the category
of convergence-approach spaces and contractions. This is a topological
category over \textbf{Set}, so that subspaces and products are defined
as initial lifts of their corresponding structured sources in \textbf{Set}.

Each convergence space $(X,\xi)$ can also be considered a convergence-approach
space by letting 
\[
\lambda_{X}(\F)(x)=\begin{cases}
0 & \mbox{if \ensuremath{x\in\lm_{X}\F}}\\
\infty & \mbox{otherwise.}
\end{cases}
\]
A map $f:X\to Y$ between two convergence spaces is continuous if
and only if it is a contraction when $X$ and $Y$ are considered
as convergence-approach spaces.

\textbf{Conv} is embedded as a concretely reflective and concretely
coreflective subcategory of \textbf{Cap}: If $X$ is a convergence-approach
space, the \textbf{Conv}-coreflection $c(X)$ is defined by $x\in\lm_{c(X)}\F$
if and only if $\lambda(\F)(x)=0$, while the \textbf{Conv}-reflection
$r(X)$ is defined by $x\in\lm_{r(X)}\F$ if and only if $\lambda(\F)(x)<\infty.$

Given a CAP space $X$, a subset $A$ of $X$ and $\epsilon\geq0$,
let 
\[
A^{(\epsilon)}=\{x\in X\,:\,\exists\U\in\mathbb{U}X,\, A\in\U,\,\lambda(\U)(x)\leq\epsilon\},
\]
and $\A^{(\epsilon)}:=\{A^{(\epsilon)}\,:\, A\in\A\}$ whenever $\A\subseteq2^{X}$.
Note that 
\begin{eqnarray*}
A^{(0)} & = & \adh_{c(X)}A:=\bigcup_{\U\in\mathbb{U}(A)}\lm_{c(X)}\U\\
\A^{(0)} & = & \adh_{c(X)}^{\natural}\A:=\{\adh_{c(X)}A:\, A\in\A\}.
\end{eqnarray*}

Let $\oplus:[0,\infty]\times[0,\infty]\to[0,\infty]$ be a commutative
and associative binary operation such that 
\begin{equation}
0\oplus r=r\label{eq:neutral}
\end{equation}
 
\begin{equation}
r\oplus\bigwedge_{a\in A}a=\bigwedge_{a\in A}\left(r\oplus a\right)\label{eq:frameforgeq}
\end{equation}
 for every $r\in[0,\infty]$, and $A\subseteq[0,\infty]$ (%
\footnote{that is, $[0,\infty]$ is a quantale for the reverse order, with neutral
element $0$.%
}). The two leading examples are the usual addition $+$ of $[0,\infty]$
(where $a+\infty=\infty$ for all $a\in[0,\infty]$), and pairwise
maximum $\vee$. 
Note that (\ref{eq:frameforgeq}) ensures that $\oplus$ preserves
order, that is, 
\begin{equation}
a\leq b\text{ and }c\leq d\then a\oplus c\leq b\oplus d,\label{eq:preservesorder}
\end{equation}
and thus $\vee\leq\oplus$, by combining (\ref{eq:preservesorder})
with (\ref{eq:neutral}). Moreover, (\ref{eq:frameforgeq}) also gives
\begin{equation}
(a+\epsilon)\oplus(b+\epsilon)\underset{\epsilon\to0}{\to}a\oplus b.\label{eq:limit}
\end{equation}

A convergence-approach space $(X,\lambda)$ is \emph{$\oplus$-regular}
if 
\begin{equation}
\lambda(\F^{(\epsilon)})\leq\lambda(\F)\oplus\epsilon\label{eq:CAPregepsilon}
\end{equation}
for all $\F\in\filter X$ and $\epsilon\in[0,\infty]$. $+$-regularity
and $\vee$-regularity are usually called \emph{regularity} and \emph{strong
regularity }respectively \cite{BrockKent}. An alternative characterization
of $\oplus$-regularity is available (see, e.g., \cite{BrockKent}):
a CAP space $(X,\lambda)$ is $\oplus$-regular if and only if 
\begin{equation}
\lambda(l[\F])\leq\lambda(\S(\F))\oplus\bigvee_{a\in A}\lambda(\S(a))(l(a))\label{eq:CAPregselection}
\end{equation}
for every $A\neq\emptyset$, $l:A\rightarrow X$, $\F\in\filter A$,
and $\S:A\rightarrow\filter X$. In particular, a convergence space
is \emph{regular }if it is regular (equivalently strongly regular)
when considered as a CAP space (%
\footnote{that is, if $\lim\F\subseteq\lim(\adh^{\natural}\F)$ for every filter
$\F$, equivalently, if $\lim\S(\F)\subseteq\lim l[\F]$ for every
$A\neq\emptyset$, $l:A\rightarrow X$, $\F\in\filter A$, and $\S:A\rightarrow\filter X$
with $l(a)\in\lim\S(a)$ for each $a\in A$.%
}).

Regularity can be localized: we call a point $x$ of a CAP space a
\emph{$\oplus$-regularity point }if (\ref{eq:CAPregepsilon}), equivalently
(\ref{eq:CAPregselection}), is satisfied at $x$.

A CAP space $(X,\lambda)$ is \emph{$\oplus$-diagonal} if 
\[
\lambda(\S(\F))\leq\lambda(\F)\oplus\bigvee_{a\in X}\lambda(\S(a))(a)
\]
for every $\F\in\filter X$ and $\S:X\rightarrow\filter X$. Note
that a $+$-diagonal CAP space is usually called \emph{diagonal.}

A \emph{pre-approach space }$X$ is a CAP space satisfying 
\[
\lambda\left(\bigcap_{\F\in\mathbb{D}}\F\right)=\bigvee_{\F\in\mathbb{D}}\lambda\F,
\]
for every $\mathbb{D}\subseteq\mathbb{F}X$. A convergence space that
is a pre-approach space when considered a CAP space is called \emph{pretopological.}
A pre-approach space that is $+$-diagonal is called an \emph{approach
space }(e.g., \emph{\cite{lowen1}})\emph{. }More generally, we call
$\oplus$-\emph{approach space} a pre-approach space that is $\oplus$-diagonal.
$\vee$-approach spaces are also called \emph{non-archimedean approach
spaces}, because a metric approach space is non-archimedan in the
metric sense if and only if it is non-archimedean in the \textbf{Cap
}sense \cite{BrockKent}. When a (pre)topological space is considered
as a (pre)approach space, or a convergence-approach space, we may
talk of a (pre)topological (pre)approach space, or a (pre)topological
CAP space.

\section{Regularity and Hom-structures}

Given CAP spaces $(X,\lambda_{X})$ and $(Y,\lambda_{Y})$, consider
on the set $C(X,Y)$ of contractions from $X$ to $Y$ the approach
limit 
\[
\lambda_{[X,Y]}(\F)(f):=\inf\{\alpha\in[0,\infty]:\,\forall\G\in\mathbb{F}X,\,\lambda_{Y}\left\langle \G,\F\right\rangle \circ f\leq\lambda_{X}(\G)\vee\alpha\}
\]
where $\langle\G,\F\rangle:=\{\langle G,F\rangle:G\in\G,\, F\in\F\}^{\uparrow}$
and $\langle G,F\rangle:=\{h(x):h\in F,\, x\in G\}$ (%
\footnote{We could consider a variant with a tensor $\oplus$ on $[0,\infty]$
verifying (\ref{eq:neutral}) and (\ref{eq:frameforgeq}):
\[
\lambda_{[X,Y]_{\oplus}}(\F)(f):=\inf\{\alpha\in[0,\infty]:\,\forall\G\in\mathbb{F}X,\,\lambda_{Y}\left\langle \G,\F\right\rangle (f(\cdot))\leq\lambda_{X}(\G)(\cdot)\oplus\alpha\},
\]
but that would make little difference in our results, even though
it would complicate statements by forcing the introduction of two
different tensors on $[0,\infty]$. Thus we prefer focusing on the
canonical hom-structure $[X,Y]$ of \textbf{Cap}. %
}). This defines a CAP space $[X,Y]$ with underlying set $C(X,Y)$
that turns \textbf{Cap }into a cartesian-closed category. Note that
$\lambda_{[X,Y]}$ makes sense when defined on $Y^{X}$ even though
the resulting structure is a preconvergence approach that satisfies
(\ref{eq:centered}) only at functions $f\in C(X,Y)$. We will not make a notational
distinction between $\lambda_{[X,Y]}$ defined on $C(X,Y)$ or on
$Y^{X}$. If $X$ and $Y$ are convergence spaces considered as CAP
spaces, then $r[X,Y]=c[X,Y]$ is the usual continuous convergence,
that we will also denote $[X,Y]$.

We define the \emph{$\oplus$-default of contraction} of a function
$f\in Y^{X}$, denoted $m_{\oplus}(f)$, in the following way:
\[
m_{\oplus}(f):=\inf\{\alpha\in[0,\infty]\,:\,\forall\G\in\filter X,\,\lambda_{Y}(f[\G])\circ f\leq\lambda_{X}(\G)\oplus\alpha\}.
\]
 It is clear that a function $f$ is a contraction if and only if
$m_{\oplus}(f)=0$. Note that 
\[
m_{+}(f)\leq m_{\vee}(f).
\]

\begin{thm}
\label{thm:continuouslimitsCAP}If $Y$ is a $\oplus$-regular convergence-approach
space, $X$ is a convergence-approach space, and $f\in Y^{X}$ then
\[
m_{\oplus}(f)\leq\left(\bigwedge_{\F\in\filter(Y^{X})}\lambda_{[X,Y]}(\F)(f)\right)\oplus\left(\bigwedge_{\F\in\filter(Y^{X})}\lambda_{[X,Y]}(\F)(f)\right).
\]
Conversely, if $Y$ is not $\oplus$-regular, there is a \emph{topological}
space $X$ and $f\in Y^{X}$ with 
\[
m_{\oplus}(f)>\bigwedge_{\F\in\filter(Y^{X})}\lambda_{[X,Y]}(\F)(f).
\]
 
\end{thm}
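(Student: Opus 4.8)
First I would establish the forward direction. Fix $f \in Y^X$ and abbreviate $m := \bigwedge_{\F \in \filter(Y^X)} \lambda_{[X,Y]}(\F)(f)$; the goal is $m_\oplus(f) \le m \oplus m$. Given $\G \in \filter X$, I want to bound $\lambda_Y(f[\G])(f(x))$ for each $x \in \lm$-relevant point in terms of $\lambda_X(\G)(x)$. The key observation is that $f[\G] = \langle \G, \{f\}^\uparrow\rangle$, so for any $\F \in \filter(Y^X)$ we have $\langle \G, \F\rangle \le \langle \G, \{f\}^\uparrow\rangle \wedge (\text{something involving }\F)$; more usefully, I would compare $\langle \G, \F\rangle$ with $f[\G]$ using that $\F$ ``converges near $f$''. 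Concretely, pick $\F$ with $\lambda_{[X,Y]}(\F)(f)$ close to $m$; the defining property of $\lambda_{[X,Y]}$ gives $\lambda_Y\langle \G, \F\rangle(f(x)) \le \lambda_X(\G)(x) \vee \lambda_{[X,Y]}(\F)(f)$. The remaining task is to pass from $\langle \G, \F\rangle$ to $f[\G]$: here is where $\oplus$-regularity of $Y$ enters. The filter $f[\G]$ should be comparable to $(\langle \G, \F\rangle)^{(\epsilon)}$ for $\epsilon$ near $m$, because every $h \in F$ (for $F \in \F$ small) is ``within $m$'' of $f$ in the sense that $\lambda_Y(h[\G])(f(\cdot))$ is controlled, so the values $h(x)$ lie in appropriate $\epsilon$-enlargements of $\langle G, F\rangle$-sets. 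Applying (\ref{eq:CAPregepsilon}) then yields $\lambda_Y(f[\G])(f(x)) \le \lambda_Y(\langle\G,\F\rangle)(f(x)) \oplus \epsilon \le (\lambda_X(\G)(x) \vee \lambda_{[X,Y]}(\F)(f)) \oplus \epsilon$, and using $\vee \le \oplus$ together with (\ref{eq:limit}) to let the slack $\epsilon$ and the two approximations tend to $m$ gives $m_\oplus(f) \le m \oplus m$.

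For the converse, suppose $Y$ is not $\oplus$-regular. Then (\ref{eq:CAPregepsilon}) fails: there exist $\F_0 \in \filter Y$, $\epsilon_0 \in [0,\infty]$, and $y_0 \in Y$ with $\lambda_Y(\F_0^{(\epsilon_0)})(y_0) > \lambda_Y(\F_0)(y_0) \oplus \epsilon_0$. The plan is to manufacture a topological space $X$ and a map $f \in Y^X$ that ``encodes'' this failure, so that $f$ lies in the closure (in $[X,Y]$) of the contractions — making $\bigwedge_\F \lambda_{[X,Y]}(\F)(f)$ small — while $m_\oplus(f)$ is forced to be large by the regularity defect. A natural choice is to take $X$ to be a suitable sum/quotient built from $\F_0$ and $\epsilon_0$: for instance, let $X = A \cup \{\omega\}$ where $A$ indexes a witnessing family, topologized so that a net approaching $\omega$ corresponds to $\F_0^{(\epsilon_0)}$-type behavior, and define $f$ on $A$ by a selection $l : A \to Y$ realizing $y_0 \in \F_0^{(\epsilon_0)}$-enlargements with $f(\omega) = y_0$. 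One then exhibits an explicit filter $\F$ on $Y^X$ (e.g. generated by sets of near-contractions indexed by neighborhoods of $\omega$) with $\lambda_{[X,Y]}(\F)(f) \le \lambda_Y(\F_0)(y_0) \oplus \epsilon_0$ or smaller, while directly computing $m_\oplus(f) \ge \lambda_Y(\F_0^{(\epsilon_0)})(y_0)$ by testing $m_\oplus$ against the neighborhood filter of $\omega$ in $X$.

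I expect the main obstacle to be the converse construction — specifically, getting the topological space $X$ right so that simultaneously (i) $f$ is genuinely in the $[X,Y]$-closure of $C(X,Y)$ with the sharp bound, and (ii) the neighborhood filter of the distinguished point of $X$, pushed forward by $f$, reconstructs exactly $\F_0^{(\epsilon_0)}$ (up to the relevant inequalities) so that $m_\oplus(f)$ inherits the full defect. This is the standard ``strictness/regularity duality'' maneuver, but adapting it to the quantitative CAP setting requires care with the $\epsilon$-enlargements $A^{(\epsilon)}$ and with checking that the approximating functions really are contractions (or have default tending to $0$ along $\F$). In the forward direction the delicate point is the precise inequality relating $f[\G]$ to an $\epsilon$-enlargement of $\langle \G, \F\rangle$; once that comparison is pinned down, the rest is a routine application of $\oplus$-regularity and the limit property (\ref{eq:limit}).
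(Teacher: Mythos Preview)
Your forward direction matches the paper's approach, but your justification for the key comparison $f[\G]\geq\langle\G,\F\rangle^{(\epsilon)}$ is garbled: you say ``the values $h(x)$ lie in appropriate $\epsilon$-enlargements of $\langle G,F\rangle$-sets,'' which is trivial (they lie in $\langle G,F\rangle$ itself). What you need is that the values $f(x)$ lie in $\langle G,F\rangle^{(\epsilon)}$, and the reason is \emph{pointwise}: specializing the defining inequality of $\lambda_{[X,Y]}$ to $\G=\{x\}^{\uparrow}$ gives $\lambda_Y(\langle\{x\}^{\uparrow},\F\rangle)(f(x))\leq\alpha_\epsilon$ for each $x$, whence $f(x)\in\langle G,F\rangle^{(\alpha_\epsilon)}$ for all $G\in\G$, $F\in\F$. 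This is isolated in the paper as a separate lemma. Your phrase ``$\lambda_Y(h[\G])(f(\cdot))$ is controlled'' points in the wrong direction --- it is the evaluation filter at each fixed $x$, not $h[\G]$, that matters.

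For the converse your plan has a genuine gap. Starting from the $\epsilon$-enlargement form (\ref{eq:CAPregepsilon}) and building a one-level space $X=A\cup\{\omega\}$ will likely not give you enough functions in $Y^X$ to form a filter $\F$ with $\lambda_{[X,Y]}(\F)(f)$ small: once $f|_A=l$ and $f(\omega)=y_0$ are fixed, what are the approximating maps, and against which filters $\G$ on $X$ must $\langle\G,\F\rangle$ be controlled? The paper instead uses the selection characterization (\ref{eq:CAPregselection}) to obtain data $A$, $l:A\to Y$, $\S:A\to\filter Y$, $\H\in\filter A$, $y_0$, and builds a \emph{two-level} topological space $X=(Y\times A)\cup A\cup\{x_\infty\}$. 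The extra layer $Y\times A$ is essential: the witnessing filter $\F_0$ on $Y^X$ consists of functions that are pinned down on $Y\times A$ (as the projection $p_Y$) and at $x_\infty$ (as $y_0$), but vary freely at the points of $A$ according to the selections $\S(a)$. This freedom is exactly what allows one to verify $\lambda_Y(\langle\G,\F_0\rangle)(f(x))\leq\lambda_X(\G)(x)\vee\alpha$ against \emph{all} $\G\in\filter X$, including the neighborhood filters of $a\in A$ and of $x_\infty$. Your outline does not indicate how to manufacture this degree of freedom, and without it the verification of the $[X,Y]$-bound will fail at the intermediate points.
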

We will need the following observation to prove Theorem \ref{thm:continuouslimitsCAP}.
\begin{lem}
\label{lem:reg}If $\alpha\in[0,\infty]$, $\G\in\filter X$, $\F\in\filter(Y^{X})$
and $f\in Y^{X}$ satisfy 
\[
\lambda_{Y}(\langle \{x\}^{\uparrow},\F\rangle)(f(x))\leq\alpha,
\]
 for every $x\in X$, then 
\[
f[\G]\geq\langle\G,\F\rangle^{(\alpha)}.
\]

\end{lem}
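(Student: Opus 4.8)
The goal is to show $f[\G] \geq \langle \G, \F \rangle^{(\alpha)}$, i.e., that every set of the form $\langle G, F\rangle^{(\alpha)}$ (for $G \in \G$, $F \in \F$) contains a set $f(G')$ for some $G' \in \G$. Unwinding the definition of the $(\alpha)$-closure, I need to show that $f(G) \subseteq \langle G, F \rangle^{(\alpha)}$ for every $G \in \G$ and $F \in \F$; this already suffices since $f(G) \in f[\G]$. So fix $G \in \G$, $F \in \F$, and a point $y = f(x)$ with $x \in G$. I must produce an ultrafilter $\U$ on $X$ with $\langle G, F\rangle \in \U$ and $\lambda_Y(\U)(f(x)) \leq \alpha$.

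\emph{Key step: choosing the ultrafilter.} Consider the filter $\langle \{x\}^{\uparrow}, \F \rangle$ on $Y$. By the definition of $\langle\cdot,\cdot\rangle$, its canonical base consists of the sets $\langle \{x\}, F'\rangle = \{h(x) : h \in F'\}$ for $F' \in \F$. The hypothesis gives $\lambda_Y(\langle \{x\}^{\uparrow}, \F\rangle)(f(x)) \leq \alpha$. Using the axiom $\lambda(\F_1 \wedge \F_2) = \lambda(\F_1) \vee \lambda(\F_2)$ together with the fact that $\lambda$ is monotone decreasing, and that $\lambda(\langle\{x\}^\uparrow,\F\rangle) = \bigwedge \{\lambda(\U) : \U \in \mathbb{U}(\langle\{x\}^\uparrow,\F\rangle)\}$ need NOT hold in a general CAP space — so I must be careful here. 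Instead, I will work directly: the point is that every set $\langle G, F \rangle$ with $x \in G$ contains $\langle \{x\}, F\rangle$, hence $\langle G, F\rangle \in \langle\{x\}^\uparrow, \F\rangle$. Thus the filter $\langle\{x\}^\uparrow,\F\rangle$ itself already contains $\langle G, F\rangle$. Now I simply need an ultrafilter $\U \geq \langle\{x\}^\uparrow,\F\rangle$ with $\lambda_Y(\U)(f(x)) \leq \alpha$ — but an arbitrary refinement could have larger $\lambda$-value.

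\emph{Resolving the obstacle.} This is the main subtlety. The clean fix: rather than insisting $\langle G, F\rangle \in \U$ for a \emph{single} $\U$, recall that $f(x) \in \langle G, F\rangle^{(\alpha)}$ requires only the \emph{existence} of \emph{some} ultrafilter $\U$ containing $\langle G, F\rangle$ with $\lambda_Y(\U)(f(x)) \leq \alpha$. Since $\langle G, F\rangle \in \langle\{x\}^\uparrow,\F\rangle$, the family $\langle\{x\}^\uparrow,\F\rangle \vee \{\langle G, F\rangle\}^\uparrow = \langle\{x\}^\uparrow,\F\rangle$ is a proper filter, so it has an ultrafilter refinement $\U$. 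To get the $\lambda$-bound, I use that $\lambda(\langle\{x\}^\uparrow,\F\rangle)(f(x)) \leq \alpha$ and choose $\U$ \emph{minimizing} $\lambda_Y(\U)(f(x))$ among ultrafilters refining $\langle\{x\}^\uparrow,\F\rangle$: by the infimum formula $\lambda_Y(\langle\{x\}^\uparrow,\F\rangle) = \bigvee$ over... — here I should instead invoke that in any preconvergence-approach space, $\lambda(\F) = \bigwedge_{\U \in \mathbb{U}(\F)} \lambda(\U)$ whenever the structure arises suitably; if that identity is not available in general I will instead appeal to the standard fact (used implicitly in the characterization \eqref{eq:CAPregselection}) that $A^{(\epsilon)}$ is computed via ultrafilters and that $\lambda(\F)(y) \geq \bigwedge\{\lambda(\U)(y) : \U \in \mathbb{U}(\F)\}$ always holds. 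That inequality is exactly what I need: from $\lambda_Y(\langle\{x\}^\uparrow,\F\rangle)(f(x)) \leq \alpha$ I get an ultrafilter $\U \supseteq \langle\{x\}^\uparrow,\F\rangle$ with $\lambda_Y(\U)(f(x)) \leq \alpha$ (or $\leq \alpha + \delta$ for every $\delta > 0$, then pass to the limit using compactness of $[0,\infty]$ or, since $(\epsilon)$-closures for $\epsilon$ and $\epsilon' > \epsilon$ nest, a routine argument). Since $\langle G, F\rangle \in \U$, this $\U$ witnesses $f(x) \in \langle G, F\rangle^{(\alpha)}$.

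\emph{Conclusion.} Having shown $f(x) \in \langle G, F\rangle^{(\alpha)}$ for all $x \in G$, we get $f(G) \subseteq \langle G, F\rangle^{(\alpha)}$, hence $\langle G, F\rangle^{(\alpha)} \in f[\G]$ for all $G \in \G, F \in \F$; as these sets generate $\langle\G,\F\rangle^{(\alpha)}$, we conclude $f[\G] \geq \langle\G,\F\rangle^{(\alpha)}$. The main obstacle, as flagged, is the passage from the value bound $\lambda_Y(\langle\{x\}^\uparrow,\F\rangle)(f(x)) \leq \alpha$ to an \emph{ultrafilter} witness with the same (or infinitesimally larger) bound; I expect this to go through via the ultrafilter description of $\lambda$-values, possibly with an $\epsilon$-of-room argument that is absorbed because $\alpha$ can be replaced by $\alpha$ throughout if one first proves the statement with $\alpha + \delta$ and lets $\delta \downarrow 0$ using that $\bigcap_{\delta>0}\langle G,F\rangle^{(\alpha+\delta)}$ relates back to $\langle G,F\rangle^{(\alpha)}$ — or simply because the hypothesis and conclusion are both ``closed'' in $\alpha$.
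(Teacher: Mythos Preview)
Your overall plan is exactly the paper's: fix $G\in\G$, $F\in\F$, $x\in G$, and show $f(x)\in\langle G,F\rangle^{(\alpha)}$ by exhibiting an ultrafilter $\U$ with $\langle G,F\rangle\in\U$ and $\lambda_Y(\U)(f(x))\leq\alpha$.

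The gap is that you have the monotonicity of $\lambda$ backwards. In any preconvergence-approach space the axiom $\lambda(\F\wedge\G)=\lambda(\F)\vee\lambda(\G)$ gives (as the paper notes in its footnote) $\F\leq\G\Rightarrow\lambda(\F)\geq\lambda(\G)$: \emph{finer filters have smaller limit values}. Hence your sentence ``an arbitrary refinement could have larger $\lambda$-value'' is false, and there is no obstacle at all. Take \emph{any} ultrafilter $\U\geq\langle\{x\}^{\uparrow},\F\rangle$; then automatically
\[
\lambda_Y(\U)(f(x))\leq\lambda_Y(\langle\{x\}^{\uparrow},\F\rangle)(f(x))\leq\alpha,
\]
and since $x\in G$ forces $\langle\{x\},F\rangle\subseteq\langle G,F\rangle$, we have $\langle G,F\rangle\in\langle\{x\}^{\uparrow},\F\rangle\subseteq\U$. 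That is the entire argument, and it is precisely what the paper does in three lines.

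Your attempted workaround via $\lambda(\F)\geq\bigwedge_{\U\in\mathbb{U}(\F)}\lambda(\U)$ is itself a consequence of the correct monotonicity (each term in the infimum is already $\leq\lambda(\F)$), so invoking it while simultaneously asserting that refinements can increase $\lambda$ is internally inconsistent. The $\epsilon$-of-room manoeuvre you sketch is both unnecessary and, as you yourself note, not completed; drop it entirely and use the one-line monotonicity argument above.
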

Note that the case $\alpha=0$ states that if $\F$ converges pointwise
to $f\in Y^{X}$ then for any $\G\in\mathbb{F}X$, $f[\G]\geq\adh_{c(Y)}^{\natural}\langle\G,\F\rangle$.
\begin{proof}
Let $x\in G$ for some $G\in\G$. We consider the filter $\langle\{x\}^{\uparrow},\F\rangle$
on $\langle G,F\rangle$ for $F\in\F$. Then by the assumption, 
\[
\lambda_{Y}(\langle\{x\}^{\uparrow},\F\rangle)(f(x))\leq\alpha,
\]
 so $f(x)\in\langle G,F\rangle^{(\alpha)}$. Thus $f(G)\subseteq\langle G,F\rangle^{(\alpha)}$
for any $G\in\G$ and $F\in\F$, so $f[\G]\geq\langle\G,\F\rangle^{(\alpha)}$.
\end{proof}

\begin{proof}[Proof of Theorem \ref{thm:continuouslimitsCAP}]
Let $Y$ be a $\oplus$-regular convergence-approach space and let
\[
c:=\bigwedge_{\F\in\filter(Y^{X})}\lambda_{[X,Y]}(\F)(f).
\]
 For $\epsilon>0$, there is an $\F_{\epsilon}\in\filter(Y^{X})$
such that $\lambda_{[X,Y]}(\F_{\epsilon})(f)<c+\epsilon$, and, by
definition of $\lambda_{[X,Y]}$, there is $\alpha_{\epsilon}<\lambda_{[X,Y]}(\F_{\epsilon})(f)+\epsilon<c+2\epsilon$
such that $\lambda\left\langle \G,\F_{\epsilon}\right\rangle \circ f\leq\lambda_{X}(\G)\vee\alpha_{\epsilon}$
for every $\G\in\mathbb{F}X$. In particular,
\[
\lambda\left\langle \{x\}^{\uparrow},\F_{\epsilon}\right\rangle (f(x))\leq\lambda_{X}(\{x\}^{\uparrow})(x)\vee\alpha_{\epsilon}=\alpha_{\epsilon}
\]
so that $f[\G]\geq\langle\G,\F_{\epsilon}\rangle^{(\alpha_{\epsilon})}$
by Lemma \ref{lem:reg}.
Since $Y$ is $\oplus$-regular, 
\begin{eqnarray*}
\lambda_{Y}(\langle\G,\F_{\epsilon}\rangle^{(\alpha_{\epsilon})})\circ f\leq\lambda\left\langle \G,\F_{\epsilon}\right\rangle \circ f\oplus\alpha_{\epsilon} & \leq & \left(\lambda_{X}(\G)\vee\alpha_{\epsilon}\right)\oplus\alpha_{\epsilon},\\
 & \leq & \lambda_{X}(\G)\oplus(\alpha_{\epsilon}\oplus\alpha_{\epsilon}),
\end{eqnarray*}
using (\ref{eq:preservesorder}) and the fact that $\vee\leq\oplus$.
Thus $\lambda_{Y}(f[\G])\circ f\leq\lambda_{X}(\G)\oplus(\alpha_{\epsilon}\oplus\alpha_{\epsilon})$
because $f[\G]\geq\langle\G,\F_{\epsilon}\rangle^{(\alpha_{\epsilon})}$.
However, $\alpha_{\epsilon}\oplus\alpha_{\epsilon}<(c+2\epsilon)\oplus(c+2\epsilon)$,
and since $\epsilon$ is arbitrary, the inequality becomes $\lambda_{Y}(f[\G])\circ f\leq\lambda_{X}(\G)\oplus(c\oplus c)$
by (\ref{eq:limit}), and we conclude that $m_{\oplus}(f)\leq c\oplus c$.

For the converse, assume that $Y$ is not $\oplus$-regular. Then
there exists $A\neq\emptyset$, $l:A\rightarrow Y$, $\S:A\rightarrow\filter Y$,
$\H\in\filter A$, and $y_{0}\in Y$ such that 
\begin{equation}
\lambda_{Y}(l[\H])(y_{0})>\lambda_{Y}(\S(\H))(y_{0})\oplus\bigvee_{a\in A}\lambda_{Y}(\S(a))(l(a)).\label{eq:prob}
\end{equation}

Let $X:=(Y\times A)\cup A\cup\{x_{\infty}\}$ where $x_{\infty}\notin A$.
Define $p_{Y}:Y\times A\to Y$ by $p_{Y}(y,a)=y$ for all $(y,a)\in Y\times A$,
and let $f:X\rightarrow Y$ be defined by $f_{|A}=l$, $f_{|Y\times A}=p_{Y}$,
and $f(x_{\infty})=y_{0}$. Let 
\[
\N:=\bigcup\limits _{H\in\H}\bigcap\limits _{a\in H}(\S(a)\times\{a\}^{\uparrow})\wedge\{a\}^{\uparrow}.
\]
Now we define $\lambda_{X}$ by, for all $a\in A$ and $y\in Y$:
\begin{eqnarray*}
\lambda_{X}(\G)((y,a)) & := & \begin{cases}
0 & \mbox{if }\G=\{(y,a)\}^{\uparrow}\\
\infty & \mbox{otherwise}
\end{cases}\\
\lambda_{X}(\G)(a) & := & \begin{cases}
0 & \mbox{if \ensuremath{\G\geq\left(\S(a)\times\{a\}^{\uparrow}\right)\wedge\{a\}^{\uparrow}}}\\
\infty & \mbox{otherwise}
\end{cases}\\
\lambda_{X}(\G)(x_{\infty}) & := & \begin{cases}
0 & \mbox{if }\G\geq\N\wedge\{x_{\infty}\}^{\uparrow}\\
\infty & \mbox{otherwise}.
\end{cases}
\end{eqnarray*}
Note that $X$ is then a topological CAP space 
%(%
%\footnote{Namely, the topological modification of the pretopological space on
%$X$ in which each point of $Y\times A$ is isolated, and the vicinity
%filters are given by $\mathcal{V}(a)=\left(\S(a)\times\{a\}^{\uparrow}\right)\wedge\{a\}^{\uparrow}$
%for each $a\in A$, and $\mathcal{V}(x_{\infty})=\H\wedge\{x_{\infty}\}^{\uparrow}$.
%The neighborhood filters in the topological modification are unchanged
%at all points but $x_{\infty}$ where it is $\N\wedge\{x_{\infty}\}^{\uparrow}$.%
%})
and that $$m_{\oplus}(f)>\lambda_{Y}(\S(\H))(y_{0})\oplus\bigvee_{a\in A}\lambda_{Y}(\S(a))(l(a)),$$
because of (\ref{eq:prob}) and $f[\H]=l[\H]$.\\
Let $P:=\{h\in Y^{X}:\, h_{|Y\times A}=p_{Y}\text{ and \ensuremath{h(x_{\infty})=y_{0}}}\}$,
and for each $a\in A$, let 
\begin{eqnarray*}
\hat{a}:Y^{X} & \to & Y\\
h & \mapsto & h(a).
\end{eqnarray*}
Let 
\[
\A:= \bigcup_{a \in A}\left\{ \hat{a}^{-1}(S)\cap P:\,\, S\in\S(a)\right\} 
\]
\[
\mathcal{B}:=\bigcup_{H\in \H}\{\bigcap_{a\in H}\hat{a}^{-1}(S_{a}^{H})\cap P:\,\, (S_{a}^{H})_{a \in H}\in\prod_{a\in H}\S(a)\}.
\]
Then $\A\cup\mathcal{B}$ has the finite intersection property, and
thus generates a filter $\F_{0}$ on $Y^{X}$. It suffices to show
that 
\begin{eqnarray*}
\lambda_{[X,Y]}(\F_{0})(f) & \leq & \lambda_{Y}(\S(\H))(y_{0})\vee\bigvee_{a\in A}\lambda_{Y}(\S(a))(l(a))
\end{eqnarray*}
% because then 
%\[
%\lambda_{[X,Y]}(\F_{0})(f)\leq\lambda_{Y}(\S(\H))(y_{0})\oplus\bigvee_{a\in A}\lambda_{Y}(\S(a))(l(a))<m_{\oplus}(f).
%\]
To this end, by definition of $[X,Y]$, we only need to show that
\[
\lambda_{Y}(\langle\G,\F_{0}\rangle)(f(x))\leq\lambda_{Y}(\S(\H))(y_0)\vee\bigvee_{a\in A}\lambda_{Y}(\S(a))(l(a))\vee\lambda_{X}(\G)(x)
\]
 for every $\G\in\mathbb{F}X$ and $x\in X$.

If $\G\geq\N\wedge\{x_{\infty}\}^{\uparrow}$ then $\langle\G,\F_{0}\rangle\geq\S(\H)\wedge\{y_{0}\}^{\uparrow}$:
for each $B\in\S(\H)$ there is $H \in\H$ and for each $a\in H$,
there is $S_{a}^{H}\in\S(a)$ such that $\bigcup_{a\in H}S_{a}^{H}\subseteq B$
and 
\[
\left\langle \bigcup_{a\in H}((S_{a}^{H}\times\{a\})\cup\{a\}),\bigcap_{a\in H}\hat{a}^{-1}(S_{a}^{H})\cap P\right\rangle \subseteq\bigcup_{a\in H}S_{a}^{H}.
\]
Thus 
\[
\lambda_{Y}(\langle\G,\F_{0}\rangle)(y_{0})\leq\lambda_{Y}(\S(\H))(y_{0})=\lambda_{Y}(\S(\H))(y_{0})\vee\lambda_{X}(\G)(x_{\infty}).
\]

If $\G\geq\left(\S(a)\times\{a\}^{\uparrow}\right)\wedge\{a\}^{\uparrow}$
for some $a\in A$, then 
\[
\langle\G,\F_{0}\rangle\geq\S(a).
\]
Indeed,  by definition of $P$, $\left\langle S\times\{a\} \cup \{a\},\hat{a}^{-1}(S)\cap P\right\rangle \subseteq S$
for any $S\in\S(a)$. 
%Moreover, $\left\langle a,\hat{a}^{-1}[\S(a)]\vee P\right\rangle \geq\S(a)$,
%so that $\left\langle a,\F_{0}\right\rangle \geq\S(a)$.
Thus, taking into account that $f(a)=l(a)$ 
\[
\lambda_{Y}(\langle\G,\F_{0}\rangle)(f(a))\leq\lambda_{Y}(\S(a))(l(a))=\lambda_{Y}(\S(a))(l(a))\vee\lambda_{X}(\G)(a).
\]

 Finally, if $\G$ is a principal ultrafilter $\{t\}^{\uparrow}$ with $t = (y,a)$
then $\langle\{t\}^{\uparrow},\F_{0}\rangle=\{f(t)\}^{\uparrow} = y^{\uparrow}$ and its limit value $\lambda_{Y}(\langle\{t\}^{\uparrow},\F_{0}\rangle)(y) = 0$
%if $t\in(Y\times A)\cup\{x_{\infty}\}$, by definition of $f$ and
%$\F_{0}$. If $t=a\in A$, however, we have $\left\langle a,\F_{0}\right\rangle \geq\S(a)$,
%so that $\lambda_{Y}\left\langle a,\F_{0}\right\rangle (f(a))\leq\lambda_{Y}(\S(a))(l(a)).$
%
%Thus 
%\[
%\lambda_{[X,Y]}(\F_{0})(f)\leq\lambda_{Y}(\S(\H))(y_{0})\vee\bigvee_{a\in A}\lambda_{Y}(\S(a))(l(a)).
%\]

\end{proof}
In particular, considering convergence spaces as convergence-approach
spaces, we get as an immediate corollary:
\begin{cor}
\label{cor:Wolkvar}Let $Y$ be a convergence space. The following
are equivalent:
\begin{enumerate}
\item $Y$ is regular;
\item 
\[
f\in\lm_{[X,Y]}\F\then f\in C(X,Y)
\]
for every convergence space $X$, every $f\in Y^{X}$ and every $\F\in\mathbb{F}(Y^{X})$;
\item 
\[
f\in\lm_{[X,Y]}\F\then f\in C(X,Y)
\]
for every \emph{topological} space $X$, every $f\in Y^{X}$ and every
$\F\in\mathbb{F}(Y^{X})$.
\end{enumerate}
\end{cor}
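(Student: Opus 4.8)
The plan is to derive Corollary~\ref{cor:Wolkvar} as a straightforward specialization of Theorem~\ref{thm:continuouslimitsCAP}, using the dictionary between convergence spaces and CAP spaces that was set up in the Preliminaries. First I would note that when $Y$ is a convergence space regarded as a CAP space, $Y$ is regular (as a convergence space) if and only if it is $\oplus$-regular for $\oplus = {+}$ (equivalently $\oplus = {\vee}$, since the two notions of regularity coincide for convergence spaces by the footnote after \eqref{eq:CAPregselection}). So in what follows I fix, say, $\oplus = {\vee}$, and I will freely translate $\lambda_Y(\F)(y) = 0$ into $y \in \lm_Y \F$ and $\lambda_Y(\F)(y) = \infty$ into $y \notin \lm_Y \F$. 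Similarly, for $f \in Y^X$, the statement $m_{\vee}(f) = 0$ is exactly the assertion that $f \in C(X,Y)$, since $m_{\vee}(f) \in \{0,\infty\}$ when $Y$ is (the CAP space associated to) a convergence space: either $f$ is a contraction, in which case $m_{\vee}(f) = 0$, or it is not, in which case there is some $\G$ with $\lambda_Y(f[\G]) \circ f \not\leq \lambda_X(\G)$, forcing $m_{\vee}(f) = \infty$. Likewise $\lambda_{[X,Y]}(\F)(f) \in \{0,\infty\}$, with value $0$ iff $f \in \lm_{[X,Y]} \F$, so $\bigwedge_{\F} \lambda_{[X,Y]}(\F)(f) = 0$ iff there exists $\F$ with $f \in \lm_{[X,Y]} \F$.

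With this dictionary in place, the implications go as follows. For $(1) \then (2)$: assume $Y$ is regular, so $Y$ is $\vee$-regular as a CAP space, and suppose $f \in \lm_{[X,Y]} \F$ for some convergence space $X$ and some $\F \in \filter(Y^X)$. Then $\bigwedge_{\G \in \filter(Y^X)} \lambda_{[X,Y]}(\G)(f) = 0$, so by Theorem~\ref{thm:continuouslimitsCAP} (with $\oplus = \vee$, noting $0 \vee 0 = 0$) we get $m_{\vee}(f) \leq 0$, i.e. $m_{\vee}(f) = 0$, i.e. $f \in C(X,Y)$. The implication $(2) \then (3)$ is trivial, since every topological space is in particular a convergence space, so the hypothesis of $(2)$ subsumes that of $(3)$. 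For $(3) \then (1)$, I argue by contraposition: if $Y$ is not regular, then $Y$ is not $\vee$-regular as a CAP space, so the converse half of Theorem~\ref{thm:continuouslimitsCAP} furnishes a \emph{topological} space $X$ and $f \in Y^X$ with $m_{\vee}(f) > \bigwedge_{\F} \lambda_{[X,Y]}(\F)(f)$; since both sides lie in $\{0, \infty\}$, this forces $m_{\vee}(f) = \infty$ and $\bigwedge_{\F} \lambda_{[X,Y]}(\F)(f) = 0$, i.e. $f \notin C(X,Y)$ yet there exists $\F$ with $f \in \lm_{[X,Y]} \F$, contradicting $(3)$.

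There is really no main obstacle here — the corollary is a formal consequence of the theorem — but the one point that deserves care is checking that, for $Y$ a convergence space, the various $[0,\infty]$-valued quantities appearing in Theorem~\ref{thm:continuouslimitsCAP} are in fact two-valued, taking only the values $0$ and $\infty$, so that inequalities between them collapse to implications between the corresponding convergence-theoretic statements. This follows because $\lambda_Y$ is two-valued by construction, hence so is $\lambda_{[X,Y]}$ (its definition is an infimum of a set of $\alpha$'s that is either all of $[0,\infty]$ or equals $[\infty,\infty]$, depending on whether the pointwise condition holds), and so is $m_{\vee}$ by the same reasoning. Once this is observed, the reading $0 \vee 0 = 0$ on the right-hand side of the theorem's first inequality is what makes the argument go through cleanly, and the fact that the theorem's converse already produces a \emph{topological} $X$ is exactly what is needed to get the sharper item $(3)$ rather than merely $(2)$ in the contrapositive direction.
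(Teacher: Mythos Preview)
Your proposal is correct and follows exactly the approach the paper intends: the paper simply states that the corollary is obtained ``considering convergence spaces as convergence-approach spaces'' as an immediate consequence of Theorem~\ref{thm:continuouslimitsCAP}, and your argument spells out precisely this specialization, including the key observation that all the $[0,\infty]$-valued quantities collapse to $\{0,\infty\}$ so that the inequalities of the theorem translate into the implications of the corollary. The choice $\oplus=\vee$ (so that $c\vee c=c$) and the use of the converse half of the theorem, which already produces a \emph{topological} $X$, are exactly the right ingredients.
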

In particular, this result generalizes \cite[Theorem 2.6]{Wolk} of
Wolk, which establishes the equivalence between (1) and (3), under
the assumption that $Y$ be topological. It also generalizes \cite[Theorem 2.0.7]{evers2}
in the English summary of the thesis \cite{Evers}, which gives the
equivalence between (1) and (3) where $Y$ is assumed to be pretopological,
and $X$ is only pretopological. Moreover, Corollary \ref{cor:Wolkvar}
answers positively the question at the end of \cite{evers2}, whether
the equivalence between (1) and (2) is valid for a general convergence
space $Y$. 
\begin{cor}
If a convergence-approach space $Y$ is regular then for every convergence-approach
space $X$ and $f\in Y^{X}$, 
\[
m_{+}(f)\leq2\bigwedge_{\F\in\filter(Y^{X})}\lambda_{[X,Y]}(\F)(f).
\]
If $Y$ is not regular, there is a \emph{topological} space $X$ and
$f\in Y^{X}$ with
\[
\bigwedge_{\F\in\filter(Y^{X})}\lambda_{[X,Y]}(\F)(f)<m_{+}(f).
\]

\end{cor}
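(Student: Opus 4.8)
The plan is to deduce this corollary directly from Theorem~\ref{thm:continuouslimitsCAP} by specializing the tensor $\oplus$ to ordinary addition $+$. First I would recall that for $\oplus = +$ the notion of $\oplus$-regularity is precisely regularity, as stated in the preliminaries, and that $m_{\oplus}$ specializes to $m_{+}$. Hence, if $Y$ is regular, Theorem~\ref{thm:continuouslimitsCAP} applied with $\oplus = +$ gives immediately
\[
m_{+}(f)\leq\left(\bigwedge_{\F\in\filter(Y^{X})}\lambda_{[X,Y]}(\F)(f)\right)+\left(\bigwedge_{\F\in\filter(Y^{X})}\lambda_{[X,Y]}(\F)(f)\right)=2\bigwedge_{\F\in\filter(Y^{X})}\lambda_{[X,Y]}(\F)(f),
\]
which is the first assertion. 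The only tiny point to check is the arithmetic identity $c + c = 2c$ in $[0,\infty]$, which holds in particular when $c = \infty$ since $\infty + \infty = \infty = 2\cdot\infty$.

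For the converse, suppose $Y$ is not regular, i.e.\ not $+$-regular. Then the second half of Theorem~\ref{thm:continuouslimitsCAP}, again with $\oplus = +$, produces a \emph{topological} space $X$ and a map $f\in Y^{X}$ with
\[
m_{+}(f)>\bigwedge_{\F\in\filter(Y^{X})}\lambda_{[X,Y]}(\F)(f),
\]
which is exactly the claimed strict inequality. So the corollary is just the case $\oplus = +$ of the theorem, reorganized into an ``iff''-type statement by pairing the two implications.

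There is essentially no obstacle here: the argument is a one-line specialization. The only thing worth making explicit in writing is that the hypotheses of the theorem are met — namely that $+$ is a commutative, associative operation on $[0,\infty]$ satisfying \eqref{eq:neutral} and \eqref{eq:frameforgeq}, which is noted in the preliminaries as one of the two leading examples — and that, under this choice, ``$\oplus$-regular'' unwinds to ``regular'' and ``$m_{\oplus}$'' to ``$m_{+}$'', again by the definitions recalled before the statement of the theorem. I would therefore keep the proof to two or three sentences, citing Theorem~\ref{thm:continuouslimitsCAP} and the relevant definitions, and noting the identity $c + c = 2c$ for the first inequality.

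\begin{proof}
Apply Theorem~\ref{thm:continuouslimitsCAP} with $\oplus = +$, which satisfies \eqref{eq:neutral} and \eqref{eq:frameforgeq}. For this choice of tensor, $\oplus$-regularity is regularity and $m_{\oplus} = m_{+}$. Thus, if $Y$ is regular, then for every convergence-approach space $X$ and every $f\in Y^{X}$,
\[
m_{+}(f)\leq\left(\bigwedge_{\F\in\filter(Y^{X})}\lambda_{[X,Y]}(\F)(f)\right)+\left(\bigwedge_{\F\in\filter(Y^{X})}\lambda_{[X,Y]}(\F)(f)\right)=2\bigwedge_{\F\in\filter(Y^{X})}\lambda_{[X,Y]}(\F)(f),
\]
the last equality holding in $[0,\infty]$ (in particular $\infty + \infty = \infty = 2\cdot\infty$). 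If $Y$ is not regular, the second part of Theorem~\ref{thm:continuouslimitsCAP} yields a topological space $X$ and $f\in Y^{X}$ with
\[
\bigwedge_{\F\in\filter(Y^{X})}\lambda_{[X,Y]}(\F)(f)<m_{+}(f),
\]
as claimed.
\end{proof}
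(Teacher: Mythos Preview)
Your proposal is correct and matches the paper's approach: the paper states this corollary without proof, treating it as an immediate specialization of Theorem~\ref{thm:continuouslimitsCAP} to the case $\oplus = +$, which is exactly what you do.
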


\begin{cor}
A convergence-approach space $Y$ is strongly regular if and only
if for every convergence approach (equivalently, topological) space
$X$ and $f\in Y^{X}$, 
\[
m_{\vee}(f)\leq\bigwedge_{\F\in\filter(Y^{X})}\lambda_{[X,Y]}(\F)(f).
\]

\end{cor}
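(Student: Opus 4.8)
The plan is to read off this corollary from Theorem~\ref{thm:continuouslimitsCAP} specialized to the tensor $\oplus=\vee$ (pairwise maximum), using that $\vee$ is idempotent, so that $r\vee r=r$ for every $r\in[0,\infty]$, and recalling that by definition $\vee$-regularity is exactly strong regularity.

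Suppose first that $Y$ is strongly regular, i.e.\ $\vee$-regular. Applying the first assertion of Theorem~\ref{thm:continuouslimitsCAP} with $\oplus=\vee$ gives, for every convergence-approach space $X$ and every $f\in Y^{X}$,
\[
m_{\vee}(f)\leq\Big(\bigwedge_{\F\in\filter(Y^{X})}\lambda_{[X,Y]}(\F)(f)\Big)\vee\Big(\bigwedge_{\F\in\filter(Y^{X})}\lambda_{[X,Y]}(\F)(f)\Big)=\bigwedge_{\F\in\filter(Y^{X})}\lambda_{[X,Y]}(\F)(f),
\]
the last equality being idempotency of $\vee$; in particular the inequality holds for every topological space $X$. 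Conversely, assume the displayed inequality holds for every topological space $X$ and every $f\in Y^{X}$. If $Y$ were not strongly regular, hence not $\vee$-regular, the second assertion of Theorem~\ref{thm:continuouslimitsCAP} (again with $\oplus=\vee$) would yield a topological space $X$ and $f\in Y^{X}$ with $m_{\vee}(f)>\bigwedge_{\F\in\filter(Y^{X})}\lambda_{[X,Y]}(\F)(f)$, contradicting the hypothesis; so $Y$ is strongly regular.

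Combining the two directions proves the equivalence and, at the same time, the parenthetical ``equivalently, topological'': strong regularity forces the inequality for all convergence-approach spaces $X$, which a fortiori gives it for all topological $X$, which by the converse already returns strong regularity. I do not anticipate any real difficulty; the points that need attention are only that the hypothesis of Theorem~\ref{thm:continuouslimitsCAP} matches the present statement because $\vee$-regular means strongly regular, and that the space built in the converse part of that theorem is indeed topological, as noted in its proof.
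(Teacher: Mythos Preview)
Your argument is correct and is exactly the intended one: the paper states this corollary without proof, precisely because it follows from Theorem~\ref{thm:continuouslimitsCAP} with $\oplus=\vee$ via the idempotency $c\vee c=c$, together with the fact that the counterexample space produced there is topological.
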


\section{Regularity and contractive Extensions}

In this section, we investigate the conditions under which a contractive
map $f:S\to Y$, where $S\subseteq X$ and $X$, $Y$ are CAP spaces,
can be extended to a contraction defined on a larger subset of $X$.
Such a result was recently obtained by G. J{\"a}ger \cite{Jaegerextension},
but we can refine his result in a way that improves upon what is known,
even in the case of convergence spaces. As a result, we obtain new
characterizations of regular and of strongly regular CAP spaces in
terms of existence of contractive extensions of maps.

Given two CAP spaces $X$ and $Y$, $x\in X$, $S\subseteq X$, $f:S\to Y$
and $\alpha,\,\epsilon\in[0,\infty]$, define
\begin{eqnarray*}
H_{S}^{\epsilon}(x) & := & \{\F\in\filter S\,:\,\lambda_{X}(\F)(x)\leq\epsilon\}\\
F_{S}^{\epsilon}(x) & := & \{y\in Y\,:\,\forall\F\in H_{S}^{\epsilon}(x),\,\lambda_{Y}(f[\F])(y)\leq\epsilon\}\\
h(S,f,\alpha) & := & \Big\{ x\in S^{(\alpha)}:\,\bigcap_{\epsilon\in[0,\infty]}F_{S}^{\epsilon}(x)\neq\emptyset\Big\}\\
h(S,f) & := & h(S,f,0).
\end{eqnarray*}
Note that $F_{S}^{\epsilon}(x)=Y$ if $H_{S}^{\epsilon}(x)=\emptyset$,
that $S\subseteq h(S,f)\subseteq h(S,f,\alpha)$ for each $\alpha$,
and that if $X$ and $Y$ are convergence spaces (considered as CAP
spaces) then 
\[
h(S,f)=\Big\{ x\in\adh S:\,\bigcap_{\F\in\mathbb{F}S,\, x\in\lim_{X}\F}\lm_{Y}f[\F]\neq\varnothing\Big\}.
\]

Given a contraction $f:S\to Y$ where $S\subseteq X$, and $\alpha\in[0,\infty]$,
we call a function $g:h(S,f,\alpha)\rightarrow Y$ with $g_{|S}=f$
and $g(x)\in\bigcap\limits _{\epsilon\in[0,\infty]}F_{S}^{\epsilon}(x)$
for each $x\in h(S,f,\alpha)$ an \emph{admissible extension} of $f$.
If each $g(x)$ is also a $\oplus$-regularity point, then we call
$g$ a \emph{$\oplus$-regular extension} of $f$. Note that we can
adopt a similar terminology in \textbf{Conv} (%
\footnote{Namely if $f:S\to Y$ is continuous for $S\subset X$, we call a function
$g:h(S,f)\to Y$ with $g_{|S}=f$ and $g(x)\in\underset{x\in\lim_{X}\F;\, S\in\F}{\bigcap}\lim_{Y}f[\F]$
for each $x\in h(S,f)$ an \emph{admissible extension of $f$. }If
moreover each $g(x)$ is a regularity point, $g$ is a \emph{regular
extension of $f$.}%
}).\textbf{ }

Let $X$ be a CAP space and $S\subseteq X$ and $\alpha \in [0,\infty].$ Then $S$ is called an \emph{$\alpha$-$\oplus$-strict}
subspace if for every $x\in S^{(\alpha)}$ and every $\F\in\filter S^{(\alpha)}$ there
is $\G\in\mathbb{F}S$ such that $\G^{(\alpha)}\leq\F$
and 
\begin{equation}
\lambda(\G)(x)\leq\lambda(\F)(x)\oplus \alpha.\label{eq:strict-1}
\end{equation}
$S$ is called \emph{$\oplus$-strict} if it is $\alpha$-$\oplus$-strict for every $\alpha \in [0,\infty]$.

$S$ is called a \emph{uniformly  $\alpha$-$\oplus$-strict}
subspace if for every $\F\in\filter S^{(\alpha)}$ there
is $\G\in\mathbb{F}S$ such that $\G^{(\alpha)}\leq\F$
and
\begin{equation}
\lambda(\G)\leq\lambda(\F)\oplus \alpha.\label{eq:strict-1}
\end{equation}
on $S^{(\alpha)}.$
$S$ is called \emph{uniformly $\oplus$-strict} if it is uniformly $\alpha$-$\oplus$-strict for every $\alpha \in [0,\infty]$.\\
%space $X$ $\epsilon$\emph{-$\oplus$-strict }if for every $x\in X$
%and every $\F\in\filter X$ there is $\G\in\mathbb{F}S$ such that
%$\G^{(\epsilon)}\leq\F$ and (\ref{eq:strict-1}).\emph{ }Of course,
%an $\epsilon$-$\oplus$-strict subspace is $\oplus$-strict. 

Clearly every subspace $S$ is (uniformly) $\infty$-$\oplus$-strict. 
When the definition of $\alpha$-$\oplus$-strictness is applied to a subspace $S$ of a convergence space $X,$ all cases $\alpha < \infty$ reduce to $\alpha = 0$. In this setting $S$ is $\oplus$-strict if for every $x\in \adh{S}$ and every $\F\in\filter \adh{S}$ there
is $\G\in\mathbb{F}S$ such that $\adh^{\natural}{\G} \leq\F$
and 
\begin{equation}
x \in \textup{lim} \F  \Longrightarrow x \in \textup{lim} \G
%\F \rightarrow x \Rightarrow \G \rightarrow x
%\lambda(\G)(x)\leq\lambda(\F)(x)\oplus \alpha.\label{eq:strict-1}
\end{equation}
This notion coincides with what is called a strict subspace in the literature on convergence spaces
(%
\footnote{in this setting the definition is usually restricted to dense subsets of $X$, e.g.,
\cite{FricKent92}, but this restriction is inessential.%
}). 

%if for every $\F\in\mathbb{F}(\adh S)$ with $x_{0}\in\lim\F\cap\adh S$,
%there exists $\G\in\mathbb{F}S$ with $x_{0}\in\lim\G$ and $\adh^{\natural}\G\leq\F$.
%Thus a subspace of a convergence space is strict if and only if it
%is $\oplus$-strict, equivalently $\epsilon$-$\oplus$-strict for
%some $\epsilon<\infty$, when considered as a CAP space.
\begin{prop}
\label{prop:diagtostrict}If $X$ is a $\oplus$-diagonal CAP space,
then every subspace is uniformly $\oplus$-strict.\end{prop}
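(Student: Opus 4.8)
The plan is to produce the required filter $\G$ on $S$ as the trace on $S$ of $\S(\F)$ for a carefully chosen selection map $\S\colon X\to\filter X$, and then to read off the two conditions in the definition of uniform $\alpha$-$\oplus$-strictness directly: the estimate $\lambda(\G)\le\lambda(\F)\oplus\alpha$ from the $\oplus$-diagonality inequality applied to $\S$ and $\F$, and the relation $\G^{(\alpha)}\le\F$ from an elementary monotonicity computation with the operators $A\mapsto A^{(\alpha)}$.

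So I would fix $\alpha\in[0,\infty]$ and $\F\in\filter S^{(\alpha)}$. For each $z\in S^{(\alpha)}$, the very definition of $S^{(\alpha)}$ supplies an ultrafilter $\U_z\in\mathbb{U}X$ with $S\in\U_z$ and $\lambda_X(\U_z)(z)\le\alpha$, and I set $\S(z):=\U_z$; for $z\in X\setminus S^{(\alpha)}$ I set $\S(z):=\{z\}^{\uparrow}$, so that $\lambda_X(\S(z))(z)=0\le\alpha$ by (\ref{eq:centered}). Hence $\bigvee_{a\in X}\lambda_X(\S(a))(a)\le\alpha$. Since every $F\in\F$ is contained in $S^{(\alpha)}$ and each $\U_z$ (for $z\in S^{(\alpha)}$) contains $S$, we get $S\in\bigcap_{z\in F}\S(z)\subseteq\S(\F)$, so the trace $\G:=\{M\cap S:M\in\S(\F)\}$ is a (proper) filter on $S$ which, regarded as a filter on $X$, refines $\S(\F)$.

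Next, $\oplus$-diagonality together with (\ref{eq:preservesorder}) gives, for every $x\in X$,
\[
\lambda_X(\S(\F))(x)\le\lambda_X(\F)(x)\oplus\bigvee_{a\in X}\lambda_X(\S(a))(a)\le\lambda_X(\F)(x)\oplus\alpha ,
\]
so, $\G$ refining $\S(\F)$, also $\lambda_X(\G)(x)\le\lambda_X(\F)(x)\oplus\alpha$; by initiality of the subspace $S^{(\alpha)}\hookrightarrow X$ this is precisely $\lambda(\G)\le\lambda(\F)\oplus\alpha$ on $S^{(\alpha)}$. For the relation $\G^{(\alpha)}\le\F$, which unwinds to "$G^{(\alpha)}\in\F$ for every $G\in\G$", I would observe that every $G\in\G$ contains a basic set $\bigcup_{z\in F}(T_z\cap S)$ with $F\in\F$ and $T_z\in\U_z$; since $T_z\cap S\in\U_z$ and $\lambda_X(\U_z)(z)\le\alpha$, we have $z\in(T_z\cap S)^{(\alpha)}$ for each $z\in F$, whence, by monotonicity of $(\cdot)^{(\alpha)}$,
\[
G^{(\alpha)}\supseteq\Big(\bigcup_{z\in F}(T_z\cap S)\Big)^{(\alpha)}\supseteq\bigcup_{z\in F}(T_z\cap S)^{(\alpha)}\supseteq F ,
\]
so $G^{(\alpha)}\in\F$. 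Since $\alpha\in[0,\infty]$ was arbitrary, $S$ is uniformly $\oplus$-strict.

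The routine parts are verifying that $\G$ is a genuine filter on $S$, the bookkeeping with traces, and the identification of $\lambda$ on the subspace with $\lambda_X$. The one point I expect to need care is keeping the inequality $\G^{(\alpha)}\le\F$ pointed the right way: it is the \emph{containment} $G^{(\alpha)}\supseteq F$, with $F$ the index set of a basic subset of $G$, that does the work -- not any reverse inclusion -- and this is exactly what forces $\G$ to be built from $\S(\F)$ with $\S$ indexed by the members of $\F$ themselves (equivalently, each $\S(z)$ chosen "adherent to $z$ at level $\alpha$, from inside $S$").
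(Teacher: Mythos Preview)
Your argument is correct and is essentially the paper's own proof: choose for each $z\in S^{(\alpha)}$ a filter $\S(z)$ on $S$ with $\lambda(\S(z))(z)\le\alpha$, put $\G=\S(\F)$ (restricted to $S$), get the $\lambda$-estimate from $\oplus$-diagonality, and verify $\G^{(\alpha)}\le\F$ by showing that for a basic $G\supseteq\bigcup_{z\in F}T_z$ each $z\in F$ lies in $G^{(\alpha)}$ via the witness $\S(z)$. The only cosmetic differences are that you pick ultrafilters where the paper picks arbitrary filters, and that you phrase the last step using the base $\{\bigcup_{z\in F}T_z\}$ of $\bigcap_{z\in F}\S(z)$ rather than a single common element; neither changes the substance.
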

\begin{proof}
Let $S\subseteq X$, let $\alpha\in[0,\infty]$ and take a filter on $ S^{(\alpha)}$ and call $\F$ is the filter generated on $X$. For each $x\in S^{(\alpha)}$, take $\S(x)\in\filter S$ such that
$\lambda(\S(x))(x)\leq\alpha$ and for $x \not \in S^{(\alpha)}$ let $\S(x) = \{x\}^{ \uparrow}.$ Since $X$ is $\oplus$-diagonal,
for $\G=\S(\F)$ we have
\[
\lambda(\G)=\lambda(\S(\F))\leq\lambda(\F)\oplus\bigvee_{x\in X}\lambda(\S(x))(x)\leq\lambda(\F)\oplus\alpha
\]
on $X$.
Clearly $S \in \bigcap_{x\in S^{(\alpha)}} \S(x)$ and since $S^{(\alpha)}$ belongs to $\F$ we have $S \in \G$. We finally check that $\G^{(\alpha)} \leq \F.$ Let $Z \in \bigcap _{x \in F \cap S^{(\alpha)}} \S(x)$ for some $F \in \F$. With $u \in F \cap S^{(\alpha)}$ the filter $\S(u)$ contains $Z$ and $\lambda \S(u)(u) \leq \alpha$. So $u \in Z^{(\alpha)}.$ It follows that $F \cap S^{(\alpha)} \subseteq Z^{(\alpha)}.$
%Thus, 
%\[
%\lambda(\G)(x_{0})\leq\bigwedge_{\alpha:S^{(\alpha)}\in\F}(\lambda(\F)(x_{0})\oplus\alpha)=\lambda(\F)(x_{0})\oplus\inf\{\alpha:\, S^{(\alpha)}\in\F\}
%\]
%by (\ref{eq:frameforgeq}).
\end{proof}
\begin{thm}
\label{thm:mainextensiondirect}Let $\alpha\in[0,\infty]$ and let
$Y$ be a CAP space. If $S$ is an $\alpha$-$\oplus$-strict subspace of a convergence
approach space $X$ and $f:S\rightarrow Y$ is a contraction, then
every $\oplus$-regular extension $g:h(S,f,\alpha)\to Y$ of $f$
satisfies $m_{\oplus}(g)\leq  \alpha\oplus\alpha$.\end{thm}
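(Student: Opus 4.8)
The plan is to estimate $m_{\oplus}(g)$ directly from its definition: I must show that for every $\G \in \filter X$,
\[
\lambda_Y(g[\G]) \circ g \leq \lambda_X(\G) \oplus \alpha \oplus \alpha .
\]
Fix $\G \in \filter X$ and a point $x \in h(S,f,\alpha)$; set $\epsilon := \lambda_X(\G)(x)$. If $\epsilon = \infty$ the inequality is trivial at $x$, so assume $\epsilon < \infty$. The first step is to push $\G$ down into $S^{(\alpha)}$: since $S \subseteq h(S,f,\alpha) \subseteq S^{(\alpha)}$, the filter $\G$ need not live on $S^{(\alpha)}$, so I would first replace it by its trace-like companion, using that $h(S,f,\alpha) = \mathrm{dom}(g) \subseteq S^{(\alpha)}$ and that $\lambda_X$ is monotone under refinement; more precisely, I want a filter $\F$ on $S^{(\alpha)}$ with $\F \geq \G$ (or at least $\lambda_X(\F)(x) \leq \epsilon$) so that $\alpha$-$\oplus$-strictness applies.

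The heart of the argument is then the $\alpha$-$\oplus$-strictness hypothesis: it yields $\G' \in \filter S$ with $(\G')^{(\alpha)} \leq \F$ and $\lambda_X(\G')(x) \leq \lambda_X(\F)(x) \oplus \alpha \leq \epsilon \oplus \alpha$. In particular $\G' \in H_S^{\epsilon \oplus \alpha}(x)$, and since $x \in h(S,f,\alpha)$ we have $g(x) \in \bigcap_{\delta} F_S^{\delta}(x)$, so in particular $g(x) \in F_S^{\epsilon \oplus \alpha}(x)$, which by definition of $F_S^{\epsilon\oplus\alpha}(x)$ gives
\[
\lambda_Y(f[\G'])(g(x)) \leq \epsilon \oplus \alpha .
\]
Now $f = g_{|S}$, so $f[\G'] = g[\G']$ as filters on $Y$; and from $(\G')^{(\alpha)} \leq \F \leq$ (the image of $\G$ in $S^{(\alpha)}$) one gets, applying $g$ and using that $g$ restricted to $S^{(\alpha)} \cap h(S,f,\alpha)$ is defined, that $g[\G']^{(\alpha)}$ controls $g[\G]$ — this is the step I would be most careful about, because I need a genuine filter inequality $g[\G] \geq g[\G'^{(\alpha)}]$ or an analogue, linking the $(\alpha)$-operation in $X$ with the image under $g$ and the $(\alpha)$-operation in $Y$. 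The cleanest route is to show $\adh$-type compatibility: because $g$ takes each point of $Z^{(\alpha)} \cap h(S,f,\alpha)$ into $f[\,\text{a suitable filter}\,]^{(\alpha)}$, mirroring the mechanism of Lemma \ref{lem:reg}, so that $g[\G] \geq (g[\G'])^{(\alpha)}$.

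Finally I invoke that $g(x)$ is a $\oplus$-regularity point of $Y$, applied to the filter $g[\G']$ and $\epsilon' := \alpha$: by (\ref{eq:CAPregepsilon}) at $g(x)$,
\[
\lambda_Y\big((g[\G'])^{(\alpha)}\big)(g(x)) \leq \lambda_Y(g[\G'])(g(x)) \oplus \alpha = \lambda_Y(f[\G'])(g(x)) \oplus \alpha \leq (\epsilon \oplus \alpha) \oplus \alpha .
\]
Combining with $g[\G] \geq (g[\G'])^{(\alpha)}$ and monotonicity of $\lambda_Y$ under refinement, and using commutativity/associativity of $\oplus$ together with $\epsilon = \lambda_X(\G)(x)$, I conclude $\lambda_Y(g[\G])(g(x)) \leq \lambda_X(\G)(x) \oplus \alpha \oplus \alpha$. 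Since $\G$ and $x$ were arbitrary, $m_{\oplus}(g) \leq \alpha \oplus \alpha$. The main obstacle, as noted, is the compatibility step relating $(\,\cdot\,)^{(\alpha)}$ in $X$, the map $g$, and $(\,\cdot\,)^{(\alpha)}$ in $Y$ on the extended domain; everything else is bookkeeping with the quantale axioms (\ref{eq:neutral})–(\ref{eq:preservesorder}) and the definitions of $H_S^{\epsilon}$, $F_S^{\epsilon}$, and $h(S,f,\alpha)$.
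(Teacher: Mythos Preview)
Your proposal is correct and follows essentially the same route as the paper: $\alpha$-$\oplus$-strictness produces a filter $\G'$ on $S$, admissibility of $g$ yields both the setwise inclusion $g\big(G'^{(\alpha)}\cap h(S,f,\alpha)\big)\subseteq (f(G'))^{(\alpha)}$ (your ``compatibility step'', which the paper proves exactly as you sketch, via an ultrafilter witness in $H_S^{\alpha}(x)$ and the fact that $g(x)\in F_S^{\alpha}(x)$) and the estimate $\lambda_Y(f[\G'])(g(x))\leq \lambda_X(\G')(x)$, after which $\oplus$-regularity at $g(x)$ closes the argument. One simplification: since $m_\oplus(g)$ is computed from filters on the domain $h(S,f,\alpha)\subseteq S^{(\alpha)}$, your initial filter already lives on $S^{(\alpha)}$ and the ``trace-like companion'' step is unnecessary.
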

\begin{proof} 
We may assume $\alpha < \infty$. Let $g$ be an $\oplus$-regular extension  $g:h(S,f,\alpha)\to Y$.
Let $\F\in\filter\left(h(S,f,\alpha)\right)$ and $x_{0}\in h(S,f,\alpha)$.
%Consider $g(x_{0})\in\bigcap\limits _{\epsilon\in[0,\infty]}F_{S}^{\epsilon}(x_{0})$. 
Since $S$
is an $\alpha$-$\oplus$-strict subspace of $X$ there is a $\G\in\filter S$
such that $\G^{(\alpha)}\leq\F$ and 
$$\lambda_{X}(\G)(x_{0})\leq\lambda_{X}(\F)(x_{0})\oplus\alpha.$$
 Since $\G^{(\alpha)} \vee h(S,f,\alpha) \leq \F$ we have 
$$(f[\G])^{\alpha} \leq  g[\G^{(\alpha)} \vee h(S,f,\alpha)]\leq g[\F],$$
where the first inequality follows from the assertion $g(G^{\alpha} \cap h(S,f,\alpha)) \subseteq (f(G))^{\alpha}.$
Indeed for
$x\in G^{(\alpha)} \cap h(S,f,\alpha)$ 
there is an ultrafilter $\U$ on $G$ with
$\lambda(\U)(x)\leq\alpha$, that is with $\U\in H_{S}^{\alpha}(x)$.
Since $g$ is an admissible extension of $f$, $g(x)\in\bigcap\limits _{\beta\in[0,\infty]}F_{S}^{\beta}(x)$
so that in particular $g(x)\in F_{S}^{\alpha}(x)$ and $\lambda_{Y}(f[\U])(g(x))\leq\alpha$.
Thus $g(x)\in(f(G))^{(\alpha)}$.\\
Therefore 
\[
\lambda_{Y}(g[\F])(g(x_{0}))\leq\lambda_{Y}(f[\G])^{(\alpha)}(g(x_{0}))\leq\lambda_{Y}((f[\G]))(g(x_{0})) 
\oplus \alpha,
\]
since $g(x_0)$ is a regularity point of $Y.$ 
%Moreover, $g(x_{0})$ is a $\oplus$-regularity point because $g$
%is a $\oplus$-regular extension, so that $\lambda_{Y}((f[\G])^{(\epsilon)})(g(x_{0}))\leq\lambda_{Y}(f[\G])(g(x_{0}))\oplus\epsilon$.
With $\lambda_{X}(\G)(x_{0})=\gamma,$ using the fact that $g(x_0) \in F_{S}^{\gamma}(x_{0})$ we obtain 
%$g(x_{0})\in\bigcap\limits _{\epsilon\in[0,\infty]}F_{S}^{\epsilon}(x_{0})$,
%$\lambda_{Y}(f[\G])(g(x_{0}))\leq\lambda_{X}(\G)(x_{0})$ because
%$g(x_{0})\in F_{S}^{\lambda_{X}(\G)(x_{0})}(x_{0})$. Thus 
\[
\lambda_{Y}(f[\G])(g(x_{0}))\leq \gamma =\lambda_{X}(\G)(x_{0}).
\]
% by (\ref{eq:preservesorder}). 
Finally we obtain 
$$\lambda_{Y}(g[\F])(g(x_{0}))\leq \lambda \G(x_0) \oplus \alpha \leq \lambda \F(x_0) \oplus \alpha\oplus \alpha$$
\end{proof}
\begin{cor}
\label{cor:mainextensionregularcase}If $S$ is a $\oplus$-strict
subspace of a CAP space $X$ and $Y$ is a $\oplus$-regular CAP space,
then every admissible extension $g:h(S,f)\to Y$ of a contraction
$f:S\rightarrow Y$ is a contraction.
\end{cor}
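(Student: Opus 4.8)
The plan is to recognize this corollary as the special case $\alpha = 0$ of Theorem \ref{thm:mainextensiondirect}, so almost all the work has already been done; what remains is to check that the hypotheses specialize correctly and that the conclusion degenerates to contractivity.

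First I would unwind the definitions at $\alpha = 0$. By definition $h(S,f) = h(S,f,0)$, so an admissible extension $g : h(S,f) \to Y$ of $f$ is precisely a function $g : h(S,f,0) \to Y$ with $g_{|S} = f$ and $g(x) \in \bigcap_{\epsilon \in [0,\infty]} F_S^{\epsilon}(x)$ for every $x \in h(S,f,0)$, i.e. an admissible extension in the sense used in Theorem \ref{thm:mainextensiondirect}. Next, since a $\oplus$-strict subspace is by definition $\alpha$-$\oplus$-strict for every $\alpha$, in particular $S$ is $0$-$\oplus$-strict in $X$. Finally, since $Y$ is $\oplus$-regular, every point of $Y$ is a $\oplus$-regularity point; hence every value $g(x)$ is a $\oplus$-regularity point, and $g$ is automatically a $\oplus$-regular extension of $f$ in the sense required by Theorem \ref{thm:mainextensiondirect}.

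Having matched the hypotheses, I would apply Theorem \ref{thm:mainextensiondirect} with $\alpha = 0$ to obtain $m_{\oplus}(g) \le 0 \oplus 0$. By the neutral-element property \eqref{eq:neutral}, $0 \oplus 0 = 0$, so $m_{\oplus}(g) = 0$, which (as observed just before Theorem \ref{thm:continuouslimitsCAP}) is exactly the statement that $g$ is a contraction. This completes the argument.

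I do not anticipate a genuine obstacle here: the corollary is a clean instantiation of the quantitative theorem. The only points requiring a moment's care are purely bookkeeping — confirming that $h(S,f)$ really is the domain appearing in Theorem \ref{thm:mainextensiondirect}, that $\oplus$-regularity of $Y$ upgrades "admissible" to "$\oplus$-regular" for free, and that the error term $\alpha \oplus \alpha$ vanishes when $\alpha = 0$ by \eqref{eq:neutral}. If one preferred a self-contained argument, one could also simply repeat the proof of Theorem \ref{thm:mainextensiondirect} verbatim with $\alpha = 0$ (using $0$-$\oplus$-strictness to produce $\G \in \filter S$ with $\G^{(0)} \le \F$ and $\lambda_X(\G)(x_0) \le \lambda_X(\F)(x_0)$, then using admissibility of $g$ together with $\oplus$-regularity of $Y$ at $g(x_0)$), but invoking the theorem directly is cleaner.
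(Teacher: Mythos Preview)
Your proposal is correct and matches the paper's intent: the corollary is stated immediately after Theorem~\ref{thm:mainextensiondirect} with no proof, precisely because it is the instantiation $\alpha=0$ you describe. Your bookkeeping checks (that $h(S,f)=h(S,f,0)$, that $\oplus$-regularity of $Y$ makes every admissible extension $\oplus$-regular, and that $0\oplus 0=0$ via \eqref{eq:neutral}) are exactly what is needed.
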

In view of Proposition \ref{prop:diagtostrict}, Corollary \ref{cor:mainextensionregularcase}
generalizes the following recent result of J{\"a}ger.
\begin{cor}
\cite[Theorem 3.2]{Jaegerextension}\label{thm:jager} Let $X$ be
a diagonal CAP space, $Y$ be a regular CAP space and let $S$ be
a subspace of $X$. Then for every contraction $f:S\rightarrow Y$
there exists a contraction $g:h(S,f)\to Y$ with $g_{|S}=f$.
\end{cor}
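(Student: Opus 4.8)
The plan is to derive Corollary~\ref{thm:jager} directly from Corollary~\ref{cor:mainextensionregularcase} by invoking Proposition~\ref{prop:diagtostrict}. So the first step is to observe that if $X$ is a diagonal CAP space, then by definition it is $+$-diagonal, hence $\oplus$-diagonal with $\oplus=+$. By Proposition~\ref{prop:diagtostrict}, every subspace of $X$ is then uniformly $+$-strict, and in particular $+$-strict (uniform $\alpha$-$\oplus$-strictness trivially implies $\alpha$-$\oplus$-strictness, since the uniform inequality $\lambda(\G)\le\lambda(\F)\oplus\alpha$ on $S^{(\alpha)}$ holds in particular at the point $x$). Thus the given subspace $S\subseteq X$ is $+$-strict.

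Next I would note that a regular CAP space is by definition $+$-regular (that is the meaning of ``regular'' fixed in Section~1), so $Y$ is a $\oplus$-regular CAP space for $\oplus=+$. All the hypotheses of Corollary~\ref{cor:mainextensionregularcase} are now in place with $\oplus=+$: $S$ is a $+$-strict subspace of the CAP space $X$, $Y$ is a $+$-regular CAP space, and $f:S\to Y$ is a contraction. The conclusion of that corollary is that every admissible extension $g:h(S,f)\to Y$ of $f$ is a contraction.

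Finally I would need to check that an admissible extension of $f$ actually exists, i.e.\ that the domain $h(S,f)$ is genuinely covered by choices $g(x)\in\bigcap_{\epsilon}F_{S}^{\epsilon}(x)$ and $g_{|S}=f$. By the very definition of $h(S,f)=h(S,f,0)$, each $x\in h(S,f)$ satisfies $\bigcap_{\epsilon\in[0,\infty]}F_{S}^{\epsilon}(x)\neq\emptyset$, so one may simply pick a point $g(x)$ in that intersection for each $x\in h(S,f)\setminus S$ and set $g_{|S}=f$; one checks that for $x\in S$ the value $f(x)$ itself lies in every $F_{S}^{\epsilon}(x)$ because $f$ is a contraction, so this $g$ is well defined and agrees with $f$ on $S$. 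Then Corollary~\ref{cor:mainextensionregularcase} yields that this $g$ is a contraction, giving the desired extension. I do not expect a serious obstacle here: the whole point is that the work has been done in Proposition~\ref{prop:diagtostrict} and Corollary~\ref{cor:mainextensionregularcase}, and the only mild care needed is in matching the vocabulary (``diagonal'' $=$ ``$+$-diagonal'', ``regular'' $=$ ``$+$-regular'') and in verifying that admissible extensions exist and restrict correctly to $S$.

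\begin{proof}
This is the special case $\oplus=+$ of Corollary~\ref{cor:mainextensionregularcase}. Indeed, a diagonal CAP space is by definition a $+$-diagonal CAP space, so by Proposition~\ref{prop:diagtostrict} every subspace of $X$ is uniformly $+$-strict, hence $+$-strict (the defining inequality for uniform $\alpha$-$\oplus$-strictness holds on all of $S^{(\alpha)}$, so in particular at any prescribed point $x\in S^{(\alpha)}$). A regular CAP space is a $+$-regular CAP space. Given a contraction $f:S\to Y$, for each $x\in h(S,f)$ the set $\bigcap_{\epsilon\in[0,\infty]}F_{S}^{\epsilon}(x)$ is non-empty by definition of $h(S,f)$, and for $x\in S$ it contains $f(x)$ because $f$ is a contraction; choosing $g(x)$ in this intersection for $x\in h(S,f)\setminus S$ and setting $g_{|S}=f$ defines an admissible extension $g:h(S,f)\to Y$ of $f$. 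By Corollary~\ref{cor:mainextensionregularcase}, $g$ is a contraction.
\end{proof}
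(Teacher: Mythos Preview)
Your proof is correct and follows exactly the paper's approach: the paper simply notes that, in view of Proposition~\ref{prop:diagtostrict}, Corollary~\ref{cor:mainextensionregularcase} specializes (with $\oplus=+$) to J\"ager's result. Your additional remarks on the existence of an admissible extension and on $f(x)\in\bigcap_\epsilon F_S^\epsilon(x)$ for $x\in S$ are routine details that the paper leaves implicit.
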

The restriction of Theorem \ref{thm:mainextensiondirect} to \textbf{Conv
}is essentially (in fact, it is slightly more general than) the direct
part of \cite[Theorem 1.1]{FricKent92}:
\begin{cor}
If $S$ is a strict subspace of a convergence space $X$ and $Y$
is a convergence space, then every regular extension $g:h(S,f)\to Y$
of a continuous map $f:S\to Y$ is continuous. In particular, if $Y$
is regular, every admissible extension $g:h(S,f)\to Y$ of a continuous
map $f:S\to Y$ is continuous.
\end{cor}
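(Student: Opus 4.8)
The plan is to obtain this Corollary as the specialization of Theorem \ref{thm:mainextensiondirect} to the case $\alpha=0$, with $X$ and $Y$ convergence spaces regarded as CAP spaces. The bulk of the work is translating the \textbf{Conv}-vocabulary into the \textbf{Cap}-vocabulary; once that dictionary is in place, the statement is immediate.

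First I would recall that, as noted just before Proposition \ref{prop:diagtostrict}, for a subspace $S$ of a convergence space $X$ the cases $\alpha<\infty$ of $\alpha$-$\oplus$-strictness all collapse to $\alpha=0$, while $\alpha=\infty$ holds trivially; hence ``$S$ is a strict subspace of $X$'' is exactly ``$S$ is a $\oplus$-strict subspace of $X$'' in the sense of Theorem \ref{thm:mainextensiondirect}. Next I would unwind the definitions of $H_S^\epsilon(x)$, $F_S^\epsilon(x)$ and $h(S,f,\alpha)$ when $X$ and $Y$ are convergence spaces: for $\epsilon<\infty$ one has $H_S^\epsilon(x)=\{\F\in\filter S:x\in\lm_X\F\}$ and $F_S^\epsilon(x)=\bigcap_{\F\in\filter S,\,x\in\lm_X\F}\lm_Y f[\F]$, whereas $H_S^\infty(x)=\filter S$ forces $F_S^\infty(x)=Y$; therefore $\bigcap_{\epsilon\in[0,\infty]}F_S^\epsilon(x)$ is precisely the intersection appearing in the \textbf{Conv}-definition of admissible extension recalled in the footnote, and $h(S,f,0)=h(S,f)$ agrees with the set displayed after the definition of $h(S,f)$. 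Likewise, since a convergence space is regular if and only if it is $\oplus$-regular as a CAP space and regularity localizes, ``$g(x)$ is a $\oplus$-regularity point'' means exactly ``$g(x)$ is a regularity point''; so a regular extension $g:h(S,f)\to Y$ in \textbf{Conv} is a $\oplus$-regular extension $g:h(S,f,0)\to Y$ in the sense of Theorem \ref{thm:mainextensiondirect}.

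With this established, Theorem \ref{thm:mainextensiondirect} applied with $\alpha=0$ gives $m_\oplus(g)\leq 0\oplus 0=0$, where $0\oplus 0=0$ by \eqref{eq:neutral}. Since $m_\oplus(g)=0$ says exactly that $g$ is a contraction, and a map between convergence spaces is continuous if and only if it is a contraction for the associated CAP structures, $g$ is continuous. For the ``in particular'' clause, if $Y$ is regular then every point of $Y$ is a regularity point, so any admissible extension $g:h(S,f)\to Y$ is automatically a regular extension and the previous paragraph applies.

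The only delicate point is the bookkeeping in the second paragraph — matching $h(S,f)$, admissible/regular extensions, and strictness between the two formalisms — but each identification is routine and largely already recorded in the preliminaries and in the remarks preceding Proposition \ref{prop:diagtostrict}; I do not expect any genuine obstacle beyond carrying out these checks carefully.
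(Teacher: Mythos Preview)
Your proposal is correct and matches the paper's approach: the corollary is stated immediately after Theorem \ref{thm:mainextensiondirect} as its restriction to \textbf{Conv}, with no separate proof given. Your careful unpacking of the dictionary between the \textbf{Conv} and \textbf{Cap} notions (strictness, $h(S,f)$, admissible and regular extensions) and the application of the theorem at $\alpha=0$ is exactly what the paper intends.
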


As for the converse, we have:
\begin{thm}
\label{thm:mainextensionconverse}If $Y$ is not $\oplus$-regular,
then there is a \emph{$\oplus$-approach space} $X$, a 
(uniformly $\oplus$-strict)
subspace $S$, a contraction $f:S\rightarrow Y$, an $\alpha\in[0,\infty)$,
and an admissible extension $g:h(S,f,\alpha)\to Y$ that is not contractive.\end{thm}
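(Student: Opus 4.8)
The plan is to mirror the converse construction in the proof of Theorem \ref{thm:continuouslimitsCAP}, using the same witness $A\neq\emptyset$, $l:A\to Y$, $\S:A\to\filter Y$, $\H\in\filter A$ and $y_0\in Y$ for which the failure of $\oplus$-regularity (\ref{eq:prob}) holds, and to repackage the space $X$ built there as a domain together with a distinguished subspace $S$. Concretely, I would take $X:=(Y\times A)\cup A\cup\{x_\infty\}$ with the same topological CAP structure $\lambda_X$ as in that proof (the neighborhood-type filters $\S(a)\times\{a\}^\uparrow\wedge\{a\}^\uparrow$ at each $a\in A$, the filter $\N\wedge\{x_\infty\}^\uparrow$ at $x_\infty$, and principal filters on $Y\times A$), and set $S:=(Y\times A)\cup A$, so that $x_\infty$ is the single extra point and $f:=g|_S$ with $f|_{Y\times A}=p_Y$, $f|_A=l$, playing the role of the contraction to be extended; the extension $g$ sends $x_\infty$ to $y_0$. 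One checks, exactly as before, that $f$ is a contraction on $S$ (on $Y\times A$ it is locally a projection with trivial convergence, and on $A$ one has $\lambda_Y(f[\S(a)\times\{a\}^\uparrow\wedge\{a\}^\uparrow])(l(a))=\lambda_Y(\S(a))(l(a))$, absorbed into $\lambda_X$ suitably — here I must be careful that $f$ is genuinely a contraction, not merely that $m_\oplus(f)$ is small, which requires that the limit values at points of $A$ be $0$; if $\lambda_Y(\S(a))(l(a))$ is not $0$ one instead replaces the ``$0$'' in the definition of $\lambda_X$ at $a$ by that value and uses a pre-approach/$\oplus$-approach structure, which is why the theorem only claims $X$ is a $\oplus$-approach space, not topological).

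The next step is to verify that $S$ is a uniformly $\oplus$-strict subspace of $X$, and in fact that $X$ is $\oplus$-diagonal, so that uniform $\oplus$-strictness of $S$ follows from Proposition \ref{prop:diagtostrict}. Because $X$ is built from a ``selection'' data set in the standard way, the relevant diagonal filters $\S(\F)$ are exactly of the form already appearing in $\N$, so checking $\oplus$-diagonality is a direct computation: given $\F\in\filter X$ and a selection $T:X\to\filter X$ with $\lambda_X(T(z))(z)=0$ for all $z$ (or $\leq$ the appropriate value in the pre-approach version), one shows $\lambda_X(T(\F))\leq\lambda_X(\F)$ pointwise, handling separately the three types of points; at $x_\infty$ this uses associativity of the union-over-$\H$ construction defining $\N$, and at points of $A$ it is immediate since the structure there is already of selection type. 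Once $X$ is a $\oplus$-approach space, $S$ is uniformly $\oplus$-strict and a fortiori $\oplus$-strict.

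It then remains to identify $h(S,f,\alpha)$ and exhibit the admissible extension. I would compute $S^{(\alpha)}$: for $\alpha=0$ we get $\adh_{c(X)}S=X$ (since $x_\infty$ is a limit of a filter containing $S$), so $x_\infty\in S^{(\alpha)}$ for every $\alpha\geq 0$, and I would pick a concrete finite $\alpha\in[0,\infty)$ — the natural choice being $\alpha:=\bigvee_{a\in A}\lambda_Y(\S(a))(l(a))$ if that is finite, or $\alpha:=\lambda_Y(\S(\H))(y_0)$, whichever makes $x_\infty$ land in $h(S,f,\alpha)$ while keeping $\alpha\oplus\alpha$ (or the relevant bound) strictly below $\lambda_Y(l[\H])(y_0)$; this is exactly where (\ref{eq:prob}) is used, since the strict inequality there gives the slack needed. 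The key computation is that $y_0\in\bigcap_{\epsilon}F_S^\epsilon(x_\infty)$ — more precisely $y_0\in F_S^\epsilon(x_\infty)$ for the relevant $\epsilon$ — which follows because every $\F\in H_S^\epsilon(x_\infty)$ refines $\N$ and hence $f[\F]$ refines $\S(\H)$, so $\lambda_Y(f[\F])(y_0)\leq\lambda_Y(\S(\H))(y_0)$. Thus $g$ with $g(x_\infty)=y_0$ is an admissible extension on $h(S,f,\alpha)$; but $m_\oplus(g)\geq\lambda_Y(g[\N\wedge\{x_\infty\}^\uparrow])(y_0)\ominus\lambda_X(\N\wedge\{x_\infty\}^\uparrow)(x_\infty)=\lambda_Y(l[\H])(y_0)>0$ by (\ref{eq:prob}), since $g[\N\wedge\{x_\infty\}^\uparrow]\leq l[\H]\wedge\{y_0\}^\uparrow$ and $\lambda_Y(l[\H]\wedge\{y_0\}^\uparrow)(y_0)=\lambda_Y(l[\H])(y_0)$, while $\lambda_X(\N\wedge\{x_\infty\}^\uparrow)(x_\infty)=0$. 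Hence $g$ is not contractive.

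The main obstacle I anticipate is the bookkeeping around whether the constructed $X$ can be taken topological and $f$ a genuine contraction (not merely small-defect), versus having to accept a pre-approach structure and a distinguished finite $\alpha$: the three-way case analysis at $Y\times A$, $A$, and $x_\infty$ for $\oplus$-diagonality, and the precise verification that $\lambda_Y(f[\F])(y_0)$ is controlled for all $\F\in H_S^\epsilon(x_\infty)$ (which hinges on every such $\F$ refining $\N$), are where errors could creep in. A secondary subtlety is choosing $\alpha$ so that simultaneously $x_\infty\in S^{(\alpha)}$, the extension is admissible at level $\alpha$, and the defect of $g$ strictly exceeds any bound of the form in Theorem \ref{thm:mainextensiondirect}; (\ref{eq:prob}) is exactly tailored to provide this, so the argument should go through once the space is set up correctly.
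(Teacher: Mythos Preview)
Your choice $S=(Y\times A)\cup A$ breaks the argument at the admissibility step. With $A\subseteq S$, the filter $\H$ (viewed on $X$) is a filter on $S$, and since every set $\bigcup_{a\in H}\bigl((S_a\times\{a\})\cup\{a\}\bigr)\in\N$ contains $H\in\H$, we have $\H\geq\N$; hence $\H\in H_S^{\epsilon}(x_\infty)$ whenever $\epsilon\geq\lambda_X(\H)(x_\infty)$. But $f[\H]=l[\H]$, so $\lambda_Y(f[\H])(y_0)=\lambda_Y(l[\H])(y_0)$, which by (\ref{eq:prob}) is strictly larger than $\lambda_Y(\S(\H))(y_0)\oplus\bigvee_a\lambda_Y(\S(a))(l(a))$. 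Thus for $\epsilon$ in the nonempty interval between $\lambda_X(\H)(x_\infty)$ and $\lambda_Y(l[\H])(y_0)$ we get $y_0\notin F_S^{\epsilon}(x_\infty)$, so $y_0\notin\bigcap_{\epsilon}F_S^{\epsilon}(x_\infty)$ and your $g$ is \emph{not} an admissible extension. The sentence ``every $\F\in H_S^{\epsilon}(x_\infty)$ refines $\N$ and hence $f[\F]$ refines $\S(\H)$'' is exactly where this fails: from $\F\geq\N$ one only gets $f[\F]\geq f[\N]$, and with your $f$ the filter $f[\N]$ has base sets $\bigcup_{a\in H}(S_a\cup\{l(a)\})$, which are \emph{supersets} of the base sets of $\S(\H)$, so $f[\N]\leq\S(\H)$, not $\geq$.

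The paper avoids this by taking $S=Y\times A$ and $f=p_Y$. Then any filter $\G$ on $S$ with $\G\geq\N$ automatically satisfies $\G\geq\mathcal{K}:=\bigcup_{H\in\H}\bigcap_{a\in H}(\S(a)\times\{a\}^\uparrow)$ (because $\G$ lives in $Y\times A$, so meeting each $N\in\N$ forces meeting $N\cap(Y\times A)$), and $p_Y[\mathcal{K}]=\S(\H)$ gives $f[\G]\geq\S(\H)$ as needed. The points of $A$ are then recovered as part of the \emph{extension} domain $h(S,f,\alpha)$, with $g|_A=l$ assigned there, and the non-topological limit values $\lambda_Y(\S(a))(l(a))$ at $a$ and $\lambda_Y(\S(\H))(y_0)$ at $x_\infty$ are built into $\lambda_X$ so that $l(a)\in\bigcap_\epsilon F_S^\epsilon(a)$ and $y_0\in\bigcap_\epsilon F_S^\epsilon(x_\infty)$ both hold. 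Your instinct to pass to a genuinely $\oplus$-approach (non-topological) structure was correct, but it must be paired with the smaller $S$.
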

\begin{proof}
Since $Y$ is not $\oplus$-regular, there exists $A\neq\emptyset$,
$l:A\rightarrow Y$, $\S:A\rightarrow\filter Y$, $\H\in\filter A$,
and $y_{0}\in Y$ such that 
\begin{equation}
\lambda_{Y}(l[\H])(y_{0})>\lambda_{Y}(\S(\H))(y_{0})\oplus\bigvee_{a\in A}\lambda_{Y}(\S(a))(l(a)).\label{eq:notreg}
\end{equation}
 Let $X:=(Y\times A)\cup A\cup\{x_{\infty}\}$, $S:=Y\times A$, and
$f:S\to Y$ be $f(y,a)=y$. Let 
\[
\N:=\bigcup\limits _{H\in\H}\bigcap\limits _{a\in H}(\S(a)\times\{a\}^{\uparrow})\wedge\{a\}^{\uparrow}.
\]
On $X$, we define the following CAP structure: 
\begin{eqnarray*}
\lambda_{X}(\G)((y,a)) & := & \begin{cases}
0 & \mbox{if }\G=\{(y,a)\}^{\uparrow}\\
\infty & \mbox{otherwise}
\end{cases}\\
\lambda_{X}(\G)(a) & := & \begin{cases}
0 & \mbox{if }\G=\{a\}^{\uparrow}\\
\lambda_{Y}(\S(a))(l(a)) & \mbox{if \ensuremath{\G\geq\left(\S(a)\times\{a\}^{\uparrow}\right)\wedge\{a\}^{\uparrow}}\ensuremath{}and \ensuremath{\G\neq\{a\}^{\uparrow}}}\\
\infty & \mbox{otherwise}
\end{cases}\\
\lambda_{X}(\G)(x_{\infty}) & := & \begin{cases}
0 & \mbox{if }\G=\{x_{\infty}\}^{\uparrow}\\
\lambda_{Y}(\S(\H))(y_{0}) & \mbox{if }\G\geq\N\\
\infty & \mbox{otherwise}.
\end{cases}
\end{eqnarray*}
Clearly $X$ is a pre-approach space. Moreover $X$ satisfies the $\oplus$-diagonal condition, the verification of which is left to the reader, and thus it is a $\oplus$-approach space.
Note that $f$ is a contraction on $S,$ which by Proposition \ref{prop:diagtostrict} is a uniformly $\oplus$-strict subspace. Put 
$$\alpha:=\lambda_{Y}(\S(\H))(y_{0})\vee\bigvee_{a\in A}\lambda_{Y}(\S(a))(l(a))$$ which is finite by (\ref{eq:notreg}).
We claim that $h(S,f,\alpha)=X.$ 

%for $\alpha:=\lambda_{Y}(\S(\H))(y_{0})\vee\bigvee_{a\in A}\lambda_{Y}(\S(a))(l(a))$,
%which is finite by (\ref{eq:notreg}). 
Indeed, first observe that $A\subseteq h(S,f,\alpha).$ Since $\S(a) \times \{a\}^{\uparrow}$ contains $S$ and has $\lambda_{X} (\S(a) \times \{a\}^{\uparrow}) (a) \leq \alpha$ whenever $a \in A,$ we already have $A \subseteq S^{(\alpha)}.$
%because if $\G\geq(\S(a)\times\{a\}^{\uparrow})\wedge\{a\}^{\uparrow}$,
%$\lambda_{X}(\G)(a)=\lambda_{Y}(\S(a))(l(a))\leq\alpha$. 
Moreover  for $a \in A$ we have $l(a)\in\bigcap\limits _{\epsilon\in[0,\infty]}F_{S}^{\epsilon}(a).$
%since $H_{S}^{\epsilon}(a)=\{\G\in\filter S:\,\G\geq\S(a)\times\{a\}^{\uparrow}\}$
This follows from the fact that for every $\G\geq(\S(a)\times\{a\}^{\uparrow})\wedge\{a\}^{\uparrow}$ on $S$ we have $\G\geq\S(a)\times\{a\}^{\uparrow}$ and hence $f[\G] \geq \S(a).$
This implies that $\lambda_{Y}(f[\G])(l(a))\leq \lambda_{Y}(\S(a))(l(a)) = \lambda_{X} (\G) (a)$. \\
Secondly observe that $x_{\infty}\in h(S,f,\alpha).$ Since the filter $\mathcal{K }=\bigcup\limits _{H\in\H}\bigcap\limits _{a\in H}(\S(a)\times\{a\})^{\uparrow}$ contains $S$ and is finer than $\N$ we have $\lambda_{X} \mathcal{K} (x_{\infty}) = \lambda_{Y} \S (\H) (y_0) \leq \alpha,$ so
 $x_{\infty}\in S^{(\alpha)}.$ Next we check that $y_0 \in\bigcap\limits _{\epsilon\in[0,\infty]}F_{S}^{\epsilon}(x_{\infty}).$ Let $\epsilon$ be finite and $\G$ a filter on $S$ with $\lambda_X (\G) (x_{\infty})\leq \epsilon.$ Then $\N \leq \G$  and $\S(\H) \leq f[\G].$ It follows that 
 $$\lambda_{Y}(f[\G])(y_{0})\leq\lambda_{Y}(\S(\H))(y_{0})=\lambda_{X}(\G)(x_{\infty})\leq\epsilon$$ and therefore $y_{0}\in F_{S}^{\epsilon}(x_{\infty})$.

%because $\lambda_{X}(\N\vee S)(x_{\infty})=\lambda_{Y}(\S(\H))(y_{0})\leq\alpha$,
%and $x_{\infty}\in h(S,f,\alpha)$ because $y_{0}\in\bigcap\limits _{\epsilon\in[0,\infty]}F_{S}^{\epsilon}(x_{\infty})$.
%Indeed, if $\epsilon<\lambda_{Y}(\S(\H))(y_{0})$ then $H_{S}^{\epsilon}(x_{\infty})=\emptyset$,
%so that $F_{S}^{\epsilon}(x_{\infty})=Y$. If $\lambda_{Y}(\S(\H))(y_{0})\leq\epsilon<\infty$,
%then
%\[
%H_{S}^{\epsilon}(x_{\infty})=\{\G\in\filter S:\,\lambda_{X}(\G)(x_{\infty})\leq\epsilon\}=\{\G\in\mathbb{F}S:\,\G\geq\N\vee S\}.
%\]
%Thus if $\G\in H_{S}^{\epsilon}(x_{\infty})$ then $f[\G]\geq\S(\H)$,
%and $\lambda_{Y}(f[\G])(y_{0})\leq\lambda_{Y}(\S(\H))(y_{0})=\lambda_{X}(\G)(x_{\infty})\leq\epsilon$.
%Thus $y_{0}\in F_{S}^{\epsilon}(x_{\infty})$. 
From the previous calculations it follows that the extension $g:h(S,f,\alpha)\rightarrow Y$
of $f$ defined by $g_{|S}=f$, $g_{|A}=l$ and $g(x_{\infty})=y_{0}$ is admissible. We finally show that $g$ is not contractive.
Since the filter on $X$ generated by $\H$ is finer than $\N$ we have:
\begin{eqnarray*}
\lambda_{X}(\H)(x_{\infty}) = \lambda_{Y}(\S(\H))(y_{0}) &\leq& \lambda_{Y}(\S(\H))(y_{0}) \oplus \bigvee_{a\in A}\lambda_{Y}(\S(a))(l(a))\\ 
&<& \lambda_{Y}(l[\H])(y_{0}) = \lambda_{Y}(g[\H])(y_{0})
\end{eqnarray*}

%Then 
%\[
%m_{\oplus}(g)>\bigvee_{a\in A}\lambda_{Y}(\S(a))(l(a))\geq0
%\]
% because 
%\begin{eqnarray*}
%\lambda_{Y}(g[\H])(y_{0})=\lambda_{Y}(l[\H])(y_{0}) & > & \lambda_{Y}(\S(\H))(y_{0})\oplus \bigvee_{a\in A}\lambda_{Y}(\S(a))(l(a))\\
% & > & \lambda_{X}(\H)(x_{\infty})\oplus\bigvee_{a\in A}\lambda_{Y}(\S(a))(l(a)).
%\end{eqnarray*}

\end{proof}
Note that if in the proof above $Y$ is a convergence space (considered
as a CAP space), then we can assume $\lambda_{Y}(\S(a))(l(a))$ to
be $0$ for each $a\in A$, and $\lambda_{Y}(\S(\H))(y_{0})$ to be
$0$ as well. Thus, $X$ is then a\emph{ topological} CAP space. Therefore,
we recover:
\begin{cor}
\cite[Theorem 1.1]{FricKent92} \label{cor:FK}A convergence space
$Y$ is regular if and only if, whenever $S$ is a strict subspace
of a convergence (equivalently, topological) space $X$ and $f:S\rightarrow Y$
is a continuous map there exists a continuous map $\bar{f}:h(S,f)\rightarrow(Y,\tau)$
with $\bar{f}_{|S}=f$.
\end{cor}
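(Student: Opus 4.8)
The plan is to derive Corollary \ref{cor:FK} directly from the two main theorems of this section, Theorem \ref{thm:mainextensiondirect} (for the ``only if'' direction) and Theorem \ref{thm:mainextensionconverse} (for the ``if'' direction), by specializing everything to convergence spaces viewed as CAP spaces, where $\oplus$ collapses and the only relevant value of $\alpha$ is $0$.

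First I would treat the forward implication. Suppose $Y$ is a regular convergence space, hence a $\oplus$-regular CAP space (for either choice of $\oplus$, since regularity and strong regularity coincide for convergence spaces). Let $S$ be a strict subspace of a convergence space $X$ and $f:S\to Y$ continuous, i.e. a contraction. As noted in the text, for a subspace of a convergence space strictness in the CAP sense is exactly strictness in the \textbf{Conv} sense, so $S$ is a $\oplus$-strict subspace of $X$. Since $Y$ is $\oplus$-regular, every function into $Y$ is made of $\oplus$-regularity points, so every admissible extension is automatically a $\oplus$-regular extension; in particular the admissible extension $g=\bar f:h(S,f)\to Y$ exists (one checks that $\bigcap_{\epsilon}F_S^\epsilon(x)=\bigcap_{x\in\lim\F,\,S\in\F}\lim f[\F]$ is nonempty for $x\in h(S,f)$ by definition of $h(S,f)$) and by Theorem \ref{thm:mainextensiondirect} satisfies $m_\oplus(g)\le 0\oplus 0=0$, so $g$ is a contraction, i.e. continuous. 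Here $X$ may even be taken topological, since one only needs \emph{some} strict subspace realization.

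For the converse, I would argue contrapositively using Theorem \ref{thm:mainextensionconverse}. Suppose $Y$ is not a regular convergence space; then it is not $\oplus$-regular as a CAP space, and Theorem \ref{thm:mainextensionconverse} produces a $\oplus$-approach space $X$, a uniformly $\oplus$-strict subspace $S$, a contraction $f:S\to Y$, a finite $\alpha$, and an admissible extension $g:h(S,f,\alpha)\to Y$ that is not contractive. The key refinement is the remark immediately preceding Corollary \ref{cor:FK}: when $Y$ is a convergence space, the data witnessing non-regularity can be chosen with $\lambda_Y(\S(a))(l(a))=0$ and $\lambda_Y(\S(\H))(y_0)=0$, forcing $\alpha=0$, so that $h(S,f,\alpha)=h(S,f)$ and the structure $\lambda_X$ defined in that proof takes only the values $0$ and $\infty$ — i.e. $X$ is a \emph{topological} CAP space, hence an ordinary topological space. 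Thus $S\subseteq X$ is a strict subspace of a topological space, $f:S\to Y$ is continuous, and $g:h(S,f)\to Y$ is a set-theoretic extension of $f$ with $g_{|S}=f$ that fails to be continuous (since $\lambda_Y(g[\H])(y_0)>\lambda_Y(l[\H])(y_0)=\infty$ is impossible — rather, $g[\H]=l[\H]$ does not converge to $y_0=g(x_\infty)$ although $\H\to x_\infty$ in $X$). Hence no continuous extension $\bar f:h(S,f)\to Y$ with $\bar f_{|S}=f$ exists, which contradicts the right-hand condition of the corollary.

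The main obstacle, and the place that deserves the most care, is verifying the two collapsing claims in the converse: that the witnessing selection $\S$ and map $l$ for a non-regular \emph{convergence} space $Y$ can indeed be chosen with all the relevant $\lambda_Y$-values equal to $0$, and that this makes the space $X$ from the proof of Theorem \ref{thm:mainextensionconverse} genuinely topological (so that it is legitimate to phrase the corollary for topological $X$). The first is essentially the observation that failure of the selection characterization \eqref{eq:CAPregselection} for a convergence space means there are $A\neq\emptyset$, $l:A\to Y$, $\S:A\to\filter Y$ with $l(a)\in\lim\S(a)$ for all $a$, and $\H\in\filter A$, with $y_0\notin\lim l[\H]$ even though $y_0\in\lim\S(\H)$; feeding this into the construction gives $\lambda_X$ valued in $\{0,\infty\}$. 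The rest — checking $S$ is strict, $f$ continuous, $h(S,f)=X$, and $g$ not continuous — is exactly the convergence-space shadow of the computations already carried out in the proof of Theorem \ref{thm:mainextensionconverse}, and so reduces to routine bookkeeping.
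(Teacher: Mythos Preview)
Your overall strategy is exactly the paper's: the forward implication is Corollary~\ref{cor:mainextensionregularcase} (i.e., Theorem~\ref{thm:mainextensiondirect} with $\alpha=0$), and the converse is Theorem~\ref{thm:mainextensionconverse} together with the observation, stated just before Corollary~\ref{cor:FK}, that for a convergence space $Y$ the witnessing data can be taken with all relevant $\lambda_Y$-values equal to $0$, so that the constructed $X$ is topological.

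There is, however, a genuine gap in your converse argument. From Theorem~\ref{thm:mainextensionconverse} you obtain a topological $X$, a strict $S\subseteq X$, a continuous $f:S\to Y$, and \emph{one particular} admissible extension $g:h(S,f)\to Y$ that is not continuous. You then write ``Hence no continuous extension $\bar f:h(S,f)\to Y$ with $\bar f_{|S}=f$ exists.'' This does not follow. In the construction, an admissible extension may assign to each $a\in A$ any point of $\lim\S(a)$ and to $x_\infty$ any point of $\lim\S(\H)$; the failure of the specific choice $g_{|A}=l$, $g(x_\infty)=y_0$ does not rule out that some other admissible choice yields a continuous extension. What the two theorems together actually characterize is ``$Y$ is regular iff \emph{every} admissible extension of every continuous $f$ on a strict subspace is continuous'', which is precisely the content of Corollary~\ref{cor:mainextensionregularcase} combined with Theorem~\ref{thm:mainextensionconverse}. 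To obtain the literal ``there exists'' formulation of Corollary~\ref{cor:FK} you would still need to argue that in the constructed example no continuous extension exists (or else read the corollary as the ``every admissible extension'' statement). The paper itself is terse here (``Therefore, we recover''), but your explicit sentence makes the leap visible and unjustified.

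A minor point: your parenthetical ``since $\lambda_Y(g[\H])(y_0)>\lambda_Y(l[\H])(y_0)=\infty$ is impossible'' is garbled; what you want is simply that $g[\H]=l[\H]$ and $y_0\notin\lim l[\H]$ while $x_\infty\in\lim\H$, as you correct yourself immediately afterward.
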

Since every subspace of a diagonal convergence space is strict, we
also recover:
\begin{cor}
\cite{cook68} \label{cor:Cook} A Hausdorff convergence space $Y$
is regular if and only if for every diagonal convergence space $X$,
every subspace $S$ of $X$, and every continuous map $f:S\rightarrow Y$
there exists a (unique) continuous map $\bar{f}:h(S,f)\rightarrow Y$
with $\bar{f}_{|S}=f$.
\end{cor}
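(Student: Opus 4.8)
The plan is to derive Corollary \ref{cor:Cook} from Corollary \ref{cor:FK} together with the fact, noted just before the corollary, that every subspace of a diagonal convergence space is strict. First I would check that direction: if $X$ is a diagonal convergence space and $S \subseteq X$, then $S$ is a strict subspace. This is precisely the \textbf{Conv}-instance of Proposition \ref{prop:diagtostrict} (with $\oplus = +$), since a diagonal convergence space is exactly a $+$-diagonal CAP space in the topological-CAP sense, and for convergence spaces $\alpha$-$\oplus$-strictness for every $\alpha$ collapses to the single condition at $\alpha = 0$, which is ordinary strictness. So every subspace of a diagonal convergence space is strict.

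Next I would argue the ``only if'' direction of Corollary \ref{cor:Cook}. Suppose $Y$ is a regular (Hausdorff) convergence space, $X$ is a diagonal convergence space, $S \subseteq X$, and $f : S \to Y$ is continuous. By the observation above, $S$ is a strict subspace of $X$, so by Corollary \ref{cor:FK} there is a continuous $\bar f : h(S,f) \to Y$ with $\bar f_{|S} = f$. Uniqueness is where Hausdorffness enters: given $x \in h(S,f)$, there is a filter $\F \in \filter S$ with $x \in \lim_X \F$, and then $\bar f(x) \in \lim_Y f[\F]$ for any continuous extension $\bar f$; since $Y$ is Hausdorff, $\lim_Y f[\F]$ is a singleton, forcing $\bar f(x)$ to be determined. (One should note $h(S,f)$ as defined forces $\bigcap \lim_Y f[\F] \neq \emptyset$ over all such $\F$, and Hausdorffness makes all these limit sets coincide and be a single point, so there is exactly one admissible value at $x$.)

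For the ``if'' direction I would invoke Corollary \ref{cor:FK} in the other direction: if $Y$ has the stated extension property for all diagonal spaces $X$, then in particular it has it for all topological spaces $X$ (every topological space is diagonal, being a pretopological diagonal space — or one restricts to the topological $X$ produced in the proof of Theorem \ref{thm:mainextensionconverse}, which is actually a $+$-approach space, hence in the convergence case topological and diagonal). Hence by the converse half of Corollary \ref{cor:FK}, $Y$ is regular. The only subtlety to record is that the extension furnished for a diagonal $X$ restricts correctly and that ``continuous'' extension here means exactly an admissible extension in the sense defined before Theorem \ref{thm:mainextensiondirect}, so that the hypothesis of Corollary \ref{cor:FK} is met.

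The main obstacle is not really technical but bookkeeping: one must make sure the uniqueness clause is correctly handled via the Hausdorff hypothesis, and that the class of test spaces matches — diagonal convergence spaces strictly contain topological spaces, so the ``if'' direction is only \emph{a priori} weaker than Corollary \ref{cor:FK}'s hypothesis, and one needs the topological test space from the converse construction to actually be diagonal (which it is, being a $+$-approach space specialized to the convergence case). Everything else is a direct citation of Proposition \ref{prop:diagtostrict} and Corollary \ref{cor:FK}.
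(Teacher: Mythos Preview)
Your approach is essentially the paper's: the paper offers no separate proof of Corollary~\ref{cor:Cook}, but simply prefaces it with ``Since every subspace of a diagonal convergence space is strict, we also recover,'' i.e., it is read off from Corollary~\ref{cor:FK} together with Proposition~\ref{prop:diagtostrict}, exactly as you do. Your added uniqueness argument via Hausdorffness is correct and fills in the parenthetical ``(unique)'' that the paper leaves implicit; two minor wording issues (a diagonal convergence space need not be a \emph{topological} CAP space, and a continuous extension is admissible but not conversely) do not affect the argument.
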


\thanks{We want to thank Gavin Seal (Ecole Polytechnique de Lausanne) for
his useful comments. 
%We are indebted to the anonymous referee who
%discovered errors in earlier versions of this paper and provided a
%number of very helpful suggestions.}

\end{document}